\newtheorem{theorem}{Theorem}[section]
\newtheorem{lemma}[theorem]{Lemma}
\newtheorem{proposition}[theorem]{Proposition}
\theoremstyle{definition}
\theoremstyle{definition}
\newcommand\R{{\mathbb R}}
\newcommand{\authorrunning}[1]{}
\newcommand{\institute}[1]{}
\author[1]{Fabian Bleitner}
\author[2]{Camilla Nobili}
\affil[1]{\small{Department of Mathematics, University of Hamburg, Germany}}
\affil[2]{\small{Department of Mathematics, University of Surrey, United Kingdom}}
\date{}
\begin{document}

\title{Lower Bounds for the Advection-Hyperdiffusion Equation}
%\titlerunning{Short Title} 
%\author{Fabian Bleitner, Camilla Nobili}
\authorrunning{F.~Bleitner, C.~Nobili}
\institute{Fabian Bleitner
\at Department of Mathematics, Universit\"at Hamburg, Germany\\ 
\email{fabian.bleitner@uni-hamburg.de}
\and 
Camilla Nobili
\at Department of Mathematics, University of Surrey, United Kingdom\\ 
\email{c.nobili@surrey.ac.uk}}

\maketitle

\abstract{
%	Inspired by \cite{nobiliPottel2021lowerBoundsOnMixingNormsForTheAdvectinDiffusionEquationInRd} on mixing estimates for the advection-diffusion equation
	Motivated by \cite{nobiliPottel2021lowerBoundsOnMixingNormsForTheAdvectinDiffusionEquationInRd}, we study the advection-hyperdiffusion equation in the whole space in two and three dimensions with the goal of understanding the decay in time of the $H^{-1}$- and $L^2$-norm of the solutions. We view the advection term as a perturbation of the hyperdiffusion equation and employ the Fourier-splitting method first introduced by Schonbek in \cite{schonbek1980DecayOfSolutionToParabolicConservationLaws} for scalar parabolic equations and later generalized to a broader class of equations including Navier-Stokes equations and magneto-hydrodynamic systems. This approach consists of decomposing the Fourier space along a sphere with radius decreasing in time. Combining the Fourier-splitting method with classical PDE techniques applied to the hyperdiffusion equation we find a lower bound for the $H^{-1}$-norm by interpolation.
}

%%%%%%%%%%%%%%%%%%%%%%%%%%%%%%%%%%
\section{Introduction}
%%%%%%%%%%%%%%%%%%%%%%%%%%%%%%%%%%
 	We study the advection-hyperdiffusion equation in $\R^n$ given by
	\begin{equation}
		\begin{aligned}
			\partial_t \theta + u\cdot \nabla \theta + \kappa \Delta^2 \theta &= 0 & \textnormal{ in }\mathbb{R}^n &\times (0,\infty),\\
			\nabla \cdot u &= 0 & \textnormal{ in }\mathbb{R}^n &\times (0,\infty),\\
			\theta(x,0) &= \theta_0(x) & \textnormal{ in }\mathbb{R}^n &,
		\end{aligned}
		\label{advectionHyperdiffusionEquation}
	\end{equation}
	where $\kappa>0$ is the molecular diffusion coefficient and $n=2,3$. This equation 
        models a hyperdiffusive scalar concentration field $\theta$ advected by a time dependent incompressible vector field $u$. Analytical and numerical mixing estimates are derived by Miles and Doering in  \cite{milesDoering2018DiffusionLimitedMixingByIncompressibleFlows} for the classical advection-diffusion equation, obtained by replacing $\Delta^2$  with $-\Delta$ in \eqref{advectionHyperdiffusionEquation}. Their numerical simulations indicate that the "filamentation length"
	\begin{equation*}
	    \lambda = \frac{\|\nabla^{-1}\theta\|_{2}}{\|\theta\|_2},
	\end{equation*}
	where
	\begin{equation*}
	    \|\nabla^{-1}\theta\|_{2}^2 = \int_{\mathbb{R}^n}\left(|\xi|^{-1}|\hat\theta|\right)^2 d\xi
	\end{equation*}
	%\cite[Corollary 2.1 and (2.7)]{mathewMezicPetzoldAmultiscaleMeasureForMixing}
         reaches asymptotically (in time) a minimal scale, called the Batchelor scale. % that limits the mixing effect of the advection field on the torus.
	The filamentation length $\lambda$ defined by Miles and Doering is a measure of mixing.
        Different mixing measures have been introduced in the last few years (see the excellent review paper \cite{thiffeaultUsingMultiscaleNormsToQuantifyMixingAndTransport} for an overview on this topic), but the $H^{-1}$-norm proposed in \cite{mathewMezicPetzoldAmultiscaleMeasureForMixing}, is the one that better captures the mixing mechanism as it emphasises on the role of large wave lengths in comparison to small wave lengths.

       The rigorous proof for the convergence of $\lambda$ to the Batchelor scale on bounded domains is a hard problem and only few partial results are available. In particular we want to cite the work in \cite{seisOnTheLittlewoodPaleySpectrumForPassiveScalarTransportEquations}, where the author derives $L^1$ estimates on Littlewood–Paley decompositions for linear advection–diffusion equations on the torus. Using the Fourier-splitting method introduced by Schonbeck \cite{schonbek1980DecayOfSolutionToParabolicConservationLaws}, Pottel and the second author in \cite{nobiliPottel2021lowerBoundsOnMixingNormsForTheAdvectinDiffusionEquationInRd} showed that in the whole space and under suitable conditions on the initial data, depending on decay rate assumptions on  $\|u(t)\|_2$ and  $\|\nabla u(t)\|_\infty$,  the filamentation length $\lambda$ either diverges to infinity or is bounded from below by a function that converges to zero. In this second scenario, mixing is possible. 
        %In particular, under the assumptions
	%\begin{equation*}
	%    \|u(t)\|_2 \sim (1+t)^{-\alpha}
	%\end{equation*}
	%with $\alpha >\frac{1}{2}$ and
	%\begin{equation*}
	%    \|\nabla u(t)\|_\infty \sim (1+t)^{-\nu}
	%\end{equation*}
       %  \textcolor{red}{Are those all the assumptions? No mention of initial %condition...}
	%the authors prove the lower bound 
	%\begin{equation*}
	 %   \|\theta(t)\|_2 \geq c (1+t)^{-\frac{n}{4}}\,.
	%\end{equation*}
	%This implies that the filamentation length is bounded as follows:
	%\begin{equation*}
	%    \lambda \geq c
    	%\begin{cases}
        %	(1+t)^\frac{3}{2} e^{-\frac{(1+t)^{1-\nu}-1}{1-\nu}}  & \textnormal{ for }0\leq\nu< 1, \\
        %	(1+t)^\frac{1}{2} & \textnormal{ for } \nu = 1,\\
        %	(1+t)^\frac{1}{2} e^{-\frac{1-(1+t)^{-(\nu-1)} }{\nu-1}} & \textnormal{ for } \nu > 1.
    	%\end{cases} 
	%\end{equation*}
	%Note that the lower bound in the case $0\leq\nu<1$ does not rule out mixing or possibly the convergence to a constant.

    The problem of mixing for the advection-hyperdiffusion equation has already been addressed in a few works. Here we want to mention the work in  \cite{fengFengIyerThiffeaultPhaseSeparationInTheAdvectiveCahnHilliardEquation}, where the authors considered the advective Cahn-Hilliard equation
	\begin{equation}
	    \label{advectiveCahnHilliard}
	    \partial_t \theta + u\cdot \nabla \theta + \kappa \Delta^2 \theta = \Delta (\theta^3-\theta),
	\end{equation}
 on the $n$-dimensional torus, and study the effect of stirring on spontaneous phase separation. For $u=0$ and small $\kappa$ solutions to \eqref{advectiveCahnHilliard} can spontaneously separate into regions of high and low concentration. On the other hand, if $u$ is sufficiently mixing, the authors show that the separation effect will be dominated and the solution converges exponentially to a homogeneous mixed state. In \cite{feng2022enhanced} the authors considered  \eqref{advectionHyperdiffusionEquation} where $u$ is an incompressible flow with circular or cylindrical symmetry in 2 and 3 space dimensions, respectively, and studied \textit{enhanced dissipation}, a phenomena intrinsically connected to mixing (see \cite{zelati2020relation} where the authors establish a precise connection between quantitative mixing rates in terms of decay of negative Sobolev norms
and enhanced dissipation time-scales). Exploiting the enhanced dissipation arising
from the combined action of the hyper-diffusion and the advection to control both the nonlinearity as well as the destabilizing effect of the negative Laplacean at large scale, the authors in \cite{mazzucato2021global} prove global existence for the modified two-dimensional Kuramoto-Sivashinsky equations in the $2-$dimensional torus 
$$\partial_t \phi+u(y)\partial_x\phi+\frac{\nu}{2}|\nabla \phi|^2+\nu\Delta^2\phi+\nu\Delta\phi=0\,.$$
 
    In this paper, we are interested in understanding how the lower bounds for $\lambda$ change when the diffusion operator in the advection-diffusion equation is substituted by the bilaplacian. We apply the methods of \cite{nobiliPottel2021lowerBoundsOnMixingNormsForTheAdvectinDiffusionEquationInRd} to bound solutions of the advection-hyperdiffusion equation.
	Our main result is the lower bound for the energy of the solution given suitable initial values and decay estimates on the flow.
    \begin{theorem}[Lower Bound for the Energy]
        \label{theoremLowerBoundForTheEnergy}
        Let the initial data satisfy
        \begin{equation*}
            \theta_0 \in L^1\cap L^2
        \end{equation*}
        and
        \begin{equation*}
            |\hat \theta_0(\xi)|\geq M
        \end{equation*}
        for $|\xi|\leq \delta$ and some constant $M>0$. Moreover assume
        \begin{equation}
            \label{u2_decay_assumption}
            \|u(t)\|_2 \sim (1+t)^{-\alpha}
        \end{equation}
        with $\alpha > \frac{3}{4}$. Then for sufficiently large $t$ the solution $\theta$ of \eqref{advectionHyperdiffusionEquation} is bounded from below by
        \begin{equation*}
            \|\theta(t)\|_2 \geq\frac{\omega_{n-1}^\frac{1}{2}}{2n^{\frac{1}{2}}} M \delta^{\frac{n}{2}}e^{-\kappa \delta^4} (1+t)^{-\frac{n}{8}},
        \end{equation*}
        where $\omega_{n-1}$ is the measure of the $(n-1)$-sphere.
    \end{theorem}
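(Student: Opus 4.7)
The plan is to follow the Fourier–splitting philosophy in its lower bound incarnation: show that the low frequencies of $\hat\theta$ are essentially frozen under the flow, so that the $L^2$ mass living on a shrinking ball of radius $\rho(t)\sim (1+t)^{-1/4}$ is comparable to $\rho(t)^n$, which is exactly the $(1+t)^{-n/4}$ scaling the theorem predicts. Concretely, I would take the Fourier transform of \eqref{advectionHyperdiffusionEquation} and, using $\nabla\cdot u=0$ so that $u\cdot\nabla\theta=\nabla\cdot(u\theta)$, write the Duhamel formula
\begin{equation*}
\hat\theta(\xi,t)=\hat\theta_0(\xi)\,e^{-\kappa|\xi|^4 t}-i\xi\cdot\int_0^t e^{-\kappa|\xi|^4(t-s)}\widehat{u\theta}(\xi,s)\,ds.
\end{equation*}
The standard energy identity $\tfrac{d}{dt}\|\theta\|_2^2=-2\kappa\|\Delta\theta\|_2^2\le 0$ gives $\|\theta(s)\|_2\le\|\theta_0\|_2$, and together with $|\widehat{u\theta}(\xi,s)|\le\|u\theta\|_1\le\|u(s)\|_2\|\theta(s)\|_2$ this yields a pointwise reverse triangle inequality
\begin{equation*}
|\hat\theta(\xi,t)|\ge |\hat\theta_0(\xi)|\,e^{-\kappa|\xi|^4 t}-|\xi|\,\|\theta_0\|_2\int_0^t\|u(s)\|_2\,ds.
\end{equation*}

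Now I would choose the shrinking radius $\rho(t):=\delta(1+t)^{-1/4}$, which is consistent with the assumption $|\hat\theta_0|\ge M$ on $\{|\xi|\le\delta\}$ and satisfies $\kappa\rho(t)^4 t\le\kappa\delta^4$. For $|\xi|\le\rho(t)$ the leading term is bounded below by $M e^{-\kappa\delta^4}$, while the error term is controlled by
\begin{equation*}
\delta\,(1+t)^{-1/4}\,\|\theta_0\|_2\int_0^t (1+s)^{-\alpha}ds,
\end{equation*}
which by a direct computation behaves like $(1+t)^{3/4-\alpha}$ (with a logarithmic factor when $\alpha=1$). This is precisely where the threshold $\alpha>3/4$ enters: it guarantees the Duhamel correction is $o(1)$ as $t\to\infty$, so for all sufficiently large $t$ and uniformly in $|\xi|\le\rho(t)$ we obtain the robust lower bound $|\hat\theta(\xi,t)|\ge \tfrac12 M e^{-\kappa\delta^4}$.

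Finally, applying Plancherel and restricting the integral to the ball of radius $\rho(t)$ gives
\begin{equation*}
\|\theta(t)\|_2^2\ge\int_{|\xi|\le\rho(t)}|\hat\theta(\xi,t)|^2 d\xi\ge\frac{M^2}{4}e^{-2\kappa\delta^4}\cdot\frac{\omega_{n-1}}{n}\rho(t)^n,
\end{equation*}
and substituting $\rho(t)=\delta(1+t)^{-1/4}$ and taking the square root yields exactly the claimed bound. The main conceptual obstacle, and the only delicate step, is the second paragraph: one must squeeze the Duhamel correction to be strictly smaller than the leading term on all of $\{|\xi|\le\rho(t)\}$ simultaneously, which forces the joint choice $\rho(t)\sim(1+t)^{-1/4}$ (to keep the hyperdiffusive exponential harmless) and the decay hypothesis $\alpha>3/4$ (to beat the factor $\rho(t)^{-1}\sim(1+t)^{1/4}$ arising from pulling $|\xi|$ out of the advection term). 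Everything else is routine Fourier calculus, and no estimate finer than the basic energy inequality for $\theta$ is needed.
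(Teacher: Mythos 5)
Your argument is correct, and it reaches exactly the stated constant $\frac{\omega_{n-1}^{1/2}}{2n^{1/2}}M\delta^{n/2}e^{-\kappa\delta^4}(1+t)^{-n/8}$: the Duhamel representation with $\widehat{u\cdot\nabla\theta}=i\xi\cdot\widehat{u\theta}$, the bound $|\widehat{u\theta}|\le\|u\|_2\|\theta\|_2\le\|u\|_2\|\theta_0\|_2$ from the basic energy inequality, the choice $\rho(t)=\delta(1+t)^{-1/4}$ so that $\kappa|\xi|^4t\le\kappa\delta^4$, and the observation that the correction is $O\bigl((1+t)^{3/4-\alpha}\bigr)$ (respectively $(1+t)^{-1/4}\log(1+t)$ or $(1+t)^{-1/4}$) all check out, and the final volume computation over $\{|\xi|\le\rho(t)\}$ is the same as in the paper's hyperdiffusion lemma. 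However, your route is genuinely different from the paper's. The paper decomposes $\theta=T+\eta$, where $T$ solves the pure hyperdiffusion equation: it proves a lower bound for $\|T\|_2$ (Lemma \ref{lemmaLowerBoundHyperdiffusionEquation}), an upper bound for the perturbation $\|\eta\|_2$ by a second Fourier-splitting argument (Lemma \ref{lemmaUpperBoundForThePerturbation}), and that perturbation bound in turn rests on the decay estimate $\|\theta\|_2\lesssim(1+t)^{-n/8}$ obtained by Fourier splitting plus a bootstrap iteration (Lemma \ref{lemmaUpperBoundAdvectionHyperdiffusion}) and on the kernel estimates of Lemma \ref{appendixLemmaBoundsOnHyperdiffusionEquation}; the conclusion then follows from $\|\theta\|_2\ge\|T\|_2-\|\eta\|_2$ once $\alpha>\frac34$ makes $\|\eta\|_2$ decay strictly faster. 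You instead prove a pointwise lower bound for $|\hat\theta(\xi,t)|$ on the shrinking ball directly, compensating the lack of any decay information on $\|\theta(s)\|_2$ (you only use $\|\theta(s)\|_2\le\|\theta_0\|_2$) by the factor $|\xi|\le\rho(t)\sim(1+t)^{-1/4}$ pulled out of the advection term; the threshold $\alpha>\frac34$ enters in exactly the same place, but your proof is shorter, avoids the iteration of Lemma \ref{lemmaUpperBoundAdvectionHyperdiffusion} and the kernel estimates, and does not even use $\theta_0\in L^1$. What the paper's heavier route buys is the intermediate quantitative information — the upper bound $\|\theta\|_2\lesssim(1+t)^{-n/8}$ and the perturbation decay — which is reused later (in Lemma \ref{lemmaUpperBoundForTheGradientOfTheAdvectionHyperdiffusionEquation} and Theorem \ref{theoremMixingBound}), so your argument replaces the proof of Theorem \ref{theoremLowerBoundForTheEnergy} but not the supporting lemmas needed for the mixing bounds.
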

    
    As a consequence of the energy bound one gets a lower bound for the filamentation length and the $\mathring H^{-1}$-norm.
    
    \begin{theorem}[Mixing Bound]
        \label{theoremMixingBound}
        Let the assumptions of Theorem \ref{theoremLowerBoundForTheEnergy} be satisfied and additionally suppose $\theta_0\in H^1$ and
        \begin{equation}
            \label{gradUinfty_decay_assumption}
            \|\nabla u(t)\|_\infty = c_{\text{\tiny{$\nabla u$}}} (1+t)^{-\nu}.
        \end{equation}
        Then there exist constants $c_1,c_2>0$ depending on $c_{\text{\tiny{$\nabla u$}}}$, $n$, $M$, $\alpha$, $\delta$, $\kappa$, $\|\theta_0\|_1$, $\|\theta_0\|_2$ and $\|\nabla\theta_0\|_2$ such that for sufficiently large $t$
    	\begin{equation}
    		\begin{aligned}
            \|\nabla^{-1}\theta(t)\|_2 &\geq c_1 (1+t)^{-\frac{n}{8}+\frac{1}{4}} f_{\text{\tiny{$\nabla u$}}} (t),\\
            \lambda &\geq c_2 (1+t)^{\frac{1}{4}} f_{\text{\tiny{$\nabla u$}}}(t),
            \end{aligned}
        \end{equation}
        where
        \begin{equation}
            \label{lkjfdlgkjbdflk}
            f_{\text{\tiny{$\nabla u$}}}(t) = 
                \begin{cases}
                    e^{-\frac{c_{\text{\tiny{$\nabla u$}}}}{1-\nu}\left((1+t)^{1-\nu}-1\right)} & \textnormal{ for }0<\nu<1,\\
                    1 & \textnormal{ for }\nu=1,\\
                    (1+t)^{c_{\text{\tiny{$\nabla u$}}}}e^{-\frac{c_{\text{\tiny{$\nabla u$}}}}{\nu-1}\left(1-(1+t)^{-(\nu-1)}\right)} & \textnormal{ for }\nu>1.
                \end{cases}
        \end{equation}
        \vspace{40pt}   %%% for the page to not break between the "given in table , where" and the equation
    \end{theorem}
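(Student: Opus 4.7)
The plan is to leverage the Plancherel--Cauchy-Schwarz interpolation
\begin{equation*}
\|\theta\|_2^2 = \int_{\R^n}|\hat\theta|^2\,d\xi \leq \left(\int|\xi|^{-2}|\hat\theta|^2\,d\xi\right)^{1/2}\left(\int|\xi|^2|\hat\theta|^2\,d\xi\right)^{1/2} = \|\nabla^{-1}\theta\|_2\,\|\nabla\theta\|_2,
\end{equation*}
which rearranges to $\|\nabla^{-1}\theta\|_2 \geq \|\theta\|_2^2/\|\nabla\theta\|_2$. Theorem \ref{theoremLowerBoundForTheEnergy} already provides the lower bound on the numerator, and the basic $L^2$ energy identity $\frac{1}{2}\frac{d}{dt}\|\theta\|_2^2 + \kappa\|\Delta\theta\|_2^2 = 0$ (obtained by testing \eqref{advectionHyperdiffusionEquation} against $\theta$ and exploiting $\nabla\cdot u=0$) gives $\|\theta(t)\|_2\leq\|\theta_0\|_2$. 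The theorem therefore reduces to establishing the matching upper bound
\begin{equation*}
\|\nabla\theta(t)\|_2 \leq C(1+t)^{-\frac{n}{8}-\frac{1}{4}}/f_{\text{\tiny{$\nabla u$}}}(t)
\end{equation*}
for $t$ large; inserting this into the interpolation yields the stated bound for $\|\nabla^{-1}\theta(t)\|_2$, and dividing by $\|\theta(t)\|_2 \leq \|\theta_0\|_2$ gives the bound for $\lambda$.

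To prove the upper bound on $\|\nabla\theta(t)\|_2$, I would test \eqref{advectionHyperdiffusionEquation} against $-\Delta\theta$. After two integrations by parts, incompressibility of $u$ eliminates the $\theta$-quadratic part of the advective contribution and leaves only the bilinear term $-\int\partial_i u_j\,\partial_i\theta\,\partial_j\theta\,dx$, bounded in absolute value by $\|\nabla u\|_\infty\|\nabla\theta\|_2^2$. This produces
\begin{equation*}
\frac{d}{dt}\|\nabla\theta\|_2^2 + 2\kappa\|\nabla\Delta\theta\|_2^2 \leq 2\|\nabla u\|_\infty\|\nabla\theta\|_2^2,
\end{equation*}
to which Schonbek's Fourier-splitting strategy applies: with a time-dependent radius $\rho(t) \propto (1+t)^{-1/4}$, estimating $|\xi|^6 \geq \rho^4(t)|\xi|^2$ on the outer region $\{|\xi|>\rho(t)\}$ gives
\begin{equation*}
\frac{d}{dt}\|\nabla\theta\|_2^2 + 2\bigl(\kappa\rho^4(t)-\|\nabla u\|_\infty\bigr)\|\nabla\theta\|_2^2 \leq 2\kappa\rho^4(t)\int_{|\xi|\leq\rho(t)}|\xi|^2|\hat\theta|^2\,d\xi.
\end{equation*}
The remaining low-frequency integral is controlled via Hausdorff--Young together with a Duhamel bound on $\|\hat\theta(t)\|_\infty$ using $\|\theta_0\|_1$ and the decay assumption \eqref{u2_decay_assumption}. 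A Gr\"onwall argument with integrating factor $\mu(t) = \exp\!\bigl(2\int_0^t(\kappa\rho^4(s) - \|\nabla u(s)\|_\infty)\,ds\bigr)$ then delivers the estimate, with the free parameter in $\rho(t)$ tuned so that the $(1+t)^{2\beta}$-contribution from the hyperdissipation matches the target exponent $\tfrac{n}{4}+\tfrac{1}{2}$, and the factor $\exp(\int_0^t\|\nabla u(s)\|_\infty\,ds)$ generating the three distinct profiles of $f_{\text{\tiny{$\nabla u$}}}$.

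The main technical obstacle will be the careful case analysis in the Gr\"onwall step, since the primitive $\int_0^t c_{\text{\tiny{$\nabla u$}}}(1+s)^{-\nu}\,ds$ displays three qualitatively different regimes at $0<\nu<1$, $\nu=1$, and $\nu>1$, which translate directly into the three cases of \eqref{lkjfdlgkjbdflk}. A secondary prerequisite is an a~priori upper bound $\|\theta(t)\|_2 \lesssim (1+t)^{-n/8}$, established by a parallel Fourier-splitting argument mirroring the proof of Theorem \ref{theoremLowerBoundForTheEnergy} with reversed inequalities (and where the hypothesis $\alpha>3/4$ in \eqref{u2_decay_assumption} is used to absorb the advective perturbation as a source term), without which the right-hand side of the differential inequality for $\|\nabla\theta\|_2^2$ would be too large to preserve the target decay rate. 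Once these ingredients are assembled, the stated lower bounds on $\|\nabla^{-1}\theta(t)\|_2$ and on the filamentation length $\lambda$ follow immediately from the interpolation identity set up at the start.
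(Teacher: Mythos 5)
Your strategy is essentially the paper's: the interpolation $\|\theta\|_2^2\leq\|\nabla\theta\|_2\,\|\nabla^{-1}\theta\|_2$, the lower bound on $\|\theta\|_2$ from Theorem \ref{theoremLowerBoundForTheEnergy}, and an upper bound on $\|\nabla\theta\|_2$ obtained by testing with $-\Delta\theta$, Fourier splitting with radius $\sim(1+t)^{-1/4}$ and Gr\"onwall --- this is exactly Lemma \ref{lemmaUpperBoundForTheGradientOfTheAdvectionHyperdiffusionEquation}, and the a priori decay $\|\theta\|_2\lesssim(1+t)^{-n/8}$ you list as a prerequisite is Lemma \ref{lemmaUpperBoundAdvectionHyperdiffusion} (its hypothesis $\alpha>\frac{6-n}{8}$ is implied by $\alpha>\frac34$). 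Your treatment of the low-frequency term (Hausdorff--Young plus a Duhamel bound on $\|\hat\theta\|_{L^\infty}$ on $\{|\xi|\leq\rho\}$) differs slightly from the paper, which simply bounds it by $r^2\|\theta\|_2^2\lesssim r^2(1+t)^{-n/4}$, but both give the same source term $\sim(1+t)^{-n/4-3/2}$, so that part is sound.

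Two points need repair. First, your derivation of the $\lambda$-bound does not deliver the stated rate: dividing the lower bound $\|\nabla^{-1}\theta\|_2\gtrsim(1+t)^{\frac14-\frac n8}f_{\text{\tiny{$\nabla u$}}}(t)$ by the constant $\|\theta_0\|_2$ only yields $\lambda\gtrsim(1+t)^{\frac14-\frac n8}f_{\text{\tiny{$\nabla u$}}}(t)$, short of the claimed $(1+t)^{\frac14}f_{\text{\tiny{$\nabla u$}}}(t)$ by a factor $(1+t)^{\frac n8}$. You must divide by a decaying upper bound: either use $\|\theta(t)\|_2\lesssim(1+t)^{-n/8}$ (which you prove anyway), or argue as the paper does, $\lambda=\|\nabla^{-1}\theta\|_2/\|\theta\|_2\geq\|\theta\|_2/\|\nabla\theta\|_2$, and insert the lower bound of Theorem \ref{theoremLowerBoundForTheEnergy} for the numerator. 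Second, the plain Gr\"onwall step you describe does not reproduce the $\nu>1$ profile in \eqref{lkjfdlgkjbdflk}: for $\nu>1$ the factor $\exp\bigl(2\int_\tau^t c_{\nabla u}(1+s)^{-\nu}\,ds\bigr)$ is bounded above and below by constants, so your argument gives $\|\nabla\theta\|_2\lesssim(1+t)^{-\frac n8-\frac14}$ and hence a lower bound without the additional $(1+t)^{c_{\nabla u}}$ gain claimed for $\nu>1$. The paper extracts that extra polynomial factor by replacing $e^{2\frac{c_{\nabla u}}{\nu-1}\left((1+\tau)^{-(\nu-1)}-1\right)}$ by $(1+\tau)^{-2c_{\nabla u}}$ inside the Duhamel integral; note that for $\tau\geq0$, $\nu>1$ this inequality in fact runs in the opposite direction, so if you want the theorem exactly as stated in the regime $\nu>1$ you must either justify such a step independently or settle for the (weaker, but safe) bound without the $(1+t)^{c_{\nabla u}}$ factor; for $0<\nu\leq1$ your Gr\"onwall case analysis matches the paper and is fine.
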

    The long time behaviour of these bounds is given in Table \ref{fig:longTimeBehaviourOfMixingBounds}, where
	\begin{equation*}
		\begin{aligned}
        \|\nabla^{-1}\theta\|_2 &\geq f(t) \overset{t\rightarrow\infty}{\longrightarrow} f_\infty,\\
        \lambda &\geq g(t) \overset{t\rightarrow\infty}{\longrightarrow} g_\infty.
        \end{aligned}
    \end{equation*}
    \begin{table}[ht]
        \begin{center}
            \begin{tabular}{ r | c l c l c l }
                 & & $0<\nu<1$ & & $\phantom{f_\infty}\nu=1$ & & $\phantom{f_\infty} \nu>1$\\
                 \hline
                 $n=2$ & & \begin{tabular}{@{}l@{}}$f_\infty=0$\\$g_\infty=0$\end{tabular}  & & \begin{tabular}{@{}l@{}}$\phantom{v}f_\infty=\textnormal{const}$\\$\phantom{v}g_\infty=\infty$\end{tabular} & & \begin{tabular}{@{}l@{}}$\phantom{v}f_\infty=\infty$\\$\phantom{v}g_\infty=\infty$\end{tabular} \\
                 \hline
                 $n=3$ & & \begin{tabular}{@{}l@{}} $f_\infty=0\phantom{\begin{cases}a\\b\\c\end{cases}}$ \\ $g_\infty=0$\end{tabular}  & & \begin{tabular}{@{}l@{}} $\phantom{v}f_\infty=0\phantom{\begin{cases}a\\b\\c\end{cases}}$ \\ $\phantom{v}g_\infty=\infty$\end{tabular} & & \begin{tabular}{@{}l@{}}$\phantom{v}f_\infty=\begin{cases} 0   &\textnormal{ if }c_{\text{\tiny{$\nabla u$}}}<\frac{1}{8}\\\textnormal{const} &\textnormal{ if }c_{\text{\tiny{$\nabla u$}}}=\frac{1}{8}\\ \infty  &\textnormal{ if }c_{\text{\tiny{$\nabla u$}}}>\frac{1}{8}\end{cases}$\\$\phantom{v}g_\infty=\infty$\end{tabular} 
            \end{tabular}
            \caption{Long time behaviour of the mixing bounds.}
            \label{fig:longTimeBehaviourOfMixingBounds}
        \end{center}
    \end{table}

%%%%%
%In order to derive lower bounds on $\theta$ one first needs an upper bound for $\theta$, which is done in Section \ref{sectionUpperBoundForTheAdvectionHyperdiffusionEquation}. There 
The paper is organized as follow: 
In Section \ref{sectionUpperBoundForTheAdvectionHyperdiffusionEquation} we introduce the Fourier-splitting method, which consists in splitting the whole Fourier space in a ball with decreasing in time radius and its complement. This allows us to estimate the time derivative of the $L^2$-norm of $\theta$ on the whole space by the $L^2$-norm in this ball. Integration in time then yields an upper bound on $\|\theta(t)\|_2$, under some specific decay assumptions on $\|u(t)\|_2$.
%the desired decay.

In Section \ref{LowerBoundForTheAdvectionHyperdiffusionEquation} we prove lower bounds for $\|\theta(t)\|_2$ by viewing the solution of \eqref{advectionHyperdiffusionEquation} as a perturbation of the hyperdiffusion equation.
%gives rise to the possibility of using the lower bounds for the non-advected equation in Lemma \ref{lemmaLowerBoundHyperdiffusionEquation}. 
In particular, in Subsection \ref{subsectionLowerBoundForTheEnergy} we show that, under suitable decay assumptions on the stirring field, the perturbation decays at least as fast as the solution of the hyperdiffusion equation, yielding the result Theorem \ref{theoremLowerBoundForTheEnergy}.
%,  implying a lower bound on the whole advection-hyperdiffusion equation which is done in Subsection \ref{subsectionLowerBoundForTheEnergy} proving Theorem \ref{theoremLowerBoundForTheEnergy}.

Finally, in Section \ref{sectionLowerBoundsOnTheMixingNorms} we derive upper bounds for the gradient of the solution yielding the result in Theorem \ref{theoremMixingBound} by interpolation.

\textbf{Notation:} Norms are considered over the whole space unless differently stated. $a\lesssim b$ denotes $a \leq c b$ for a constant $c>0$. Similarly $a\sim b$ and $a\gtrsim b$ with $a=cb$ and $a\geq cb$. As we did not attempt to optimize the constant prefactors in the upper/lower bounds, the dependency of constants on parameters like the dimension $n$, the molecular diffusivity $\kappa$ or the decay exponents $\alpha$ and $\nu$ (see \eqref{u2_decay_assumption},\eqref{gradUinfty_decay_assumption}) will often be hidden to increase the readability of the manuscript, unless tracking is required for the argument to work. Note that in few estimates these constants that we do not want to track may change from line to line, without being renamed.

%%%%%%%%%%%%%%%%%%%%%%%%%%%%%%%%%%
\section{Upper Bound for the Advection-Hyperdiffusion Equation}
\label{sectionUpperBoundForTheAdvectionHyperdiffusionEquation}
%%%%%%%%%%%%%%%%%%%%%%%%%%%%%%%%%%
    In this section we introduce the Fourier-splitting method and prove upper bounds for the advection-hyperdiffusion equation under suitable decay of the flow field. This bound will later be used to derive upper bounds for the perturbation and the gradient of the solution.
    \begin{lemma}[Upper Bound for the Advection-Hyperdiffusion Equation]
    \label{lemmaUpperBoundAdvectionHyperdiffusion}
    Let the initial data satisfy 
    \begin{equation*}
        \theta_0\in L^1 \cap L^2
    \end{equation*}
    and let $u(\cdot, t)\in L^2$ be a divergence-free vector field satisfying
    \begin{equation}
        \label{lemmaUpperBoundAdvectionHyperdiffusionuL2DecayAssumption}
        \|u(t)\|_2 \sim (1+t)^{-\alpha}
    \end{equation}
    with $\alpha>\frac{6-n}{8}$. Then there exists a constant $c>0$ depending on $n$, $\alpha$, $\kappa$, $\|\theta_0\|_1$ and $\|\theta_0\|_2$ such that
    \begin{equation*}
        \|\theta(t)\|_2 \leq c (1+t)^{-\frac{n}{8}}.
    \end{equation*}
    \end{lemma}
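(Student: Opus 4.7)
My plan is to apply Schonbek's Fourier-splitting strategy, adapted to the bilaplacian, and then close the resulting nonlinear integral inequality by iterating (bootstrapping) a decay rate for $\|\theta\|_2$.

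I would start with the standard energy identity $\tfrac{1}{2}\tfrac{d}{dt}\|\theta\|_2^2 + \kappa\|\Delta\theta\|_2^2 = 0$, valid since $\nabla\cdot u=0$ kills the advection term and twofold integration by parts produces the Laplacian squared. Writing the dissipation in Fourier as $\int|\xi|^4|\hat\theta|^2\,d\xi$ and splitting the domain into the ball $B(t)=\{|\xi|\leq\rho(t)\}$ and its complement, with $\rho(t)^4:=\gamma/(2\kappa(1+t))$ for a constant $\gamma>(n+2)/4$ to be fixed, produces the differential inequality
$$\frac{d}{dt}\|\theta\|_2^2 + \frac{\gamma}{1+t}\|\theta\|_2^2 \leq \frac{\gamma}{1+t}\int_{B(t)}|\hat\theta(\xi,t)|^2\,d\xi.$$
Multiplication by the integrating factor $(1+t)^\gamma$ followed by integration in time reduces the bound for $\|\theta\|_2$ to controlling $\int_{B(s)}|\hat\theta|^2\,d\xi$ under the outer $s$-integral.

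To estimate $\hat\theta$ on the low-frequency ball, I would use the divergence-free condition to rewrite $u\cdot\nabla\theta=\nabla\cdot(u\theta)$, whose Fourier transform is $i\xi\cdot\widehat{u\theta}$ with modulus at most $|\xi|\|u\|_2\|\theta\|_2$. Combining with the Duhamel formula for \eqref{advectionHyperdiffusionEquation} and the contraction $|e^{-\kappa|\xi|^4(t-s)}|\leq 1$, I get, for $\xi\in B(t)$,
$$|\hat\theta(\xi,t)|\leq\|\theta_0\|_1 + \rho(t)\int_0^t \|u(s)\|_2\|\theta(s)\|_2\,ds.$$
Squaring, integrating over $B(s)$ (whose volume is $\sim(1+s)^{-n/4}$), using monotonicity to replace the inner time integral by its value at $t$, and plugging back yields the master inequality
$$\|\theta(t)\|_2^2 \lesssim (1+t)^{-n/4} + (1+t)^{-(n+2)/4}\Bigl(\int_0^t \|u(s)\|_2\|\theta(s)\|_2\,ds\Bigr)^2,$$
the $(1+t)^{-\gamma}\|\theta_0\|_2^2$ and $(1+t)^{-n/4}\|\theta_0\|_1^2$ contributions being absorbed into the first term since $\gamma>n/4$.

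The main obstacle is closing this master inequality, since $\|\theta\|_2$ still appears on the right-hand side through the advective convolution. I would proceed by bootstrap: suppose $\|\theta(s)\|_2\leq C(1+s)^{-\mu}$ for some $\mu\geq 0$ and plug \eqref{lemmaUpperBoundAdvectionHyperdiffusionuL2DecayAssumption} in. Either $\alpha+\mu\geq 1$, in which case the inner integral is bounded uniformly in $t$ and the nonlinear contribution decays like $(1+t)^{-(n+2)/8}$—faster than the target rate; or $\alpha+\mu<1$, in which case the integral grows like $(1+s)^{1-\alpha-\mu}$ and the master inequality upgrades $\mu$ to $\mu'=\mu+\alpha-(6-n)/8$. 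Because $\alpha>(6-n)/8$ is exactly the standing hypothesis, each pass gains a strictly positive amount of decay; starting from the trivial $\mu_0=0$ coming from the energy estimate $\|\theta(t)\|_2\leq\|\theta_0\|_2$ and iterating finitely many times drives $\mu$ past $n/8$, at which point the master inequality self-improves to the desired bound $\|\theta(t)\|_2\lesssim (1+t)^{-n/8}$.
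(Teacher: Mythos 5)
Your proposal is correct and follows essentially the same route as the paper: Fourier splitting of the dissipation along a ball of radius $\sim(1+t)^{-1/4}$, the Duhamel representation with the pointwise bound $|\widehat{u\cdot\nabla\theta}(\xi)|\leq|\xi|\,\|u\|_2\|\theta\|_2$ on low frequencies, and a bootstrap whose rate increment per step is exactly $\alpha-\tfrac{6-n}{8}$, just as in the paper's iteration $\gamma_{j+1}=\tfrac{n}{8}-\tfrac34+\alpha+\gamma_j$. The only cosmetic deviations are that you bound the initial-data term by $|\hat\theta_0|\leq\|\theta_0\|_1$ times the ball volume where the paper invokes its appendix kernel estimate (both give $(1+t)^{-n/4}$), and that at $\alpha+\mu=1$ the inner integral is $\ln(1+t)$ rather than uniformly bounded, which is harmless since $(1+t)^{-(n+2)/4}\ln^2(1+t)\lesssim(1+t)^{-n/4}$ (the paper handles this borderline case with the $(1+t)^{\epsilon}$ trick).
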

    
    \begin{proof}
    The proof consists of $3$ steps. First we bound the derivative of the $L^2$-norm of $\theta$ over the whole space by the $L^2$-norm of $\theta$ over a ball with radius decreasing in time. Second we show, by integrating this ODE in time, that if $\theta$ decays with some rate in a specific range then it actually decays faster. Third we iterate to get the maximal decay rate of this range resulting in the final bound.
    \paragraph{Step 1:}
    Testing \eqref{advectionHyperdiffusionEquation} with $\theta$, using partial integration, one gets
    \begin{equation}
        \label{lemmaUpperBoundAdvectionHyperdiffusionX}
        \frac{1}{2}\frac{d}{dt} \|\theta\|_2^2 = - \kappa \int_{\mathbb{R}^n}\theta \Delta^2 \theta \ dx - \int_{\mathbb{R}^n}\theta u \cdot \nabla \theta \ dx = - \kappa \|\Delta \theta\|_2^2,
    \end{equation}
    where the advection term vanishes by the incompressibility condition of $u$ as
    \begin{equation*}
        \int_{\mathbb{R}^n}\theta u\cdot \nabla \theta \ dx = -\int_{\mathbb{R}^n}\theta^2\nabla \cdot u \ dx - \int_{\mathbb{R}^n}\theta u\cdot\nabla \theta \ dx = - \int_{\mathbb{R}^n}\theta u\cdot\nabla \theta \ dx.
    \end{equation*}
    By Plancherel \eqref{lemmaUpperBoundAdvectionHyperdiffusionX} can be expressed in Fourier space via
    \begin{equation}
        \label{lemmaUpperBoundAdvectionHyperdiffusionA}
        \frac{d}{dt} \|\hat \theta\|_2^2 = - 2\kappa \|\xi^2 \hat \theta\|_2^2.
    \end{equation}
    Define the set
    \begin{equation*}
        S(t) = \left\lbrace \xi\in \mathbb{R}^n \ \middle |\  |\xi| \leq r(t) \right \rbrace
    \end{equation*}
    with $r=\left(\frac{\beta}{2\kappa(1+t)}\right)^\frac{1}{4}$, where $\beta>0$ is a constant to be specified later and denote its complement by $S^c$. Then by \eqref{lemmaUpperBoundAdvectionHyperdiffusionA}
    \begin{equation*}
        \begin{aligned}
            \frac{d}{dt} \|\hat \theta\|_2^2
            &= - 2\kappa \|\xi^2 \hat \theta\|_2^2
            \leq - 2\kappa \|\xi^2 \hat \theta\|_{L^2(S^c)}^2
            = -2 \kappa \int_{S^c} |\xi|^4 |\hat \theta|^2 d\xi
            \\
            &\leq - 2\kappa r^4 \int_{S^c} |\theta|^2 d\xi
            = -2\kappa r^4 \left(\|\hat\theta\|_2^2 - \|\hat\theta \|_{L^2(S)}^2 \right)
            \\
            &= - \frac{\beta}{1+t} \left(\|\hat\theta\|_2^2 - \|\hat \theta \|_{L^2(S)}^2\right)
        \end{aligned}
    \end{equation*}
    such that
    \begin{equation}
        \label{lemmaUpperBoundAdvectionHyperdiffusionE}
        \begin{aligned}
            \frac{d}{dt} \left((1+t)^\beta \|\hat\theta\|_2^2\right) &= \beta (1+t)^{\beta-1} \|\hat\theta\|_2^2 + (1+t)^\beta \frac{d}{dt} \|\hat \theta\|_2^2 
            \\
            &\leq \beta (1+t)^{\beta-1} \|\hat\theta\|_2^2 - \beta (1+t)^{\beta-1}\left(\|\hat\theta\|_2^2 - \|\hat \theta \|_{L^2(S)}^2\right) 
            \\
            &= \beta (1+t)^{\beta-1} \|\hat\theta\|_{L^2(S)}^2.
        \end{aligned}
    \end{equation}
    \paragraph{Step 2:}
    Assume
    \begin{equation}
        \label{lemmaUpperBoundAdvectionHyperdiffusionF}
        \|\theta\|_2 \lesssim A (1+t)^{-\gamma}
    \end{equation}
    with $A$ depending on $\kappa$, $n$, $\|\theta_0\|_1$ and $\|\theta_0\|_2$ for some $\gamma\geq 0$. Notice that by \eqref{lemmaUpperBoundAdvectionHyperdiffusionA} the assumption is fulfilled for $\gamma=0$. Writing \eqref{advectionHyperdiffusionEquation} in Fourier space
    \begin{equation}
        \label{lemmaUpperBoundAdvectionHyperdiffusionB}
        \hat\theta_t = -\kappa |\xi|^4 \hat \theta - \widehat{u\cdot\nabla \theta}
    \end{equation}
    and its solution is given by
    \begin{equation*}
        \hat\theta(t,\xi) = e^{-\kappa |\xi|^4 t} \hat\theta_0(\xi) - \int_0^t e^{-\kappa |\xi|^4 (t-s)} \widehat{u\cdot \nabla \theta}(s,\xi) \ ds.
    \end{equation*}
    Young's inequality yields
    \begin{equation}
        \label{lemmaUpperBoundAdvectionHyperdiffusionC}
        \begin{aligned}
            |\hat\theta(\xi)|^2 &= e^{-2\kappa |\xi|^4 t} |\hat\theta_0(\xi)|^2 - 2e^{-\kappa |\xi|^4 t} \hat\theta_0(\xi)\int_0^t e^{-\kappa |\xi|^4 (t-s)} \widehat{u\cdot \nabla \theta}(s,\xi) \ ds
            \\
            &\qquad +\left(\int_0^t e^{-\kappa |\xi|^4 (t-s)} \widehat{u\cdot \nabla \theta}(s,\xi) \ ds\right)^2
            \\
            &\leq 2e^{-2\kappa |\xi|^4 t} |\hat\theta_0(\xi)|^2+2\left(\int_0^t e^{-\kappa |\xi|^4 (t-s)} \widehat{u\cdot \nabla \theta}(s,\xi) \ ds\right)^2
            \\
            &\leq 2e^{-2\kappa |\xi|^4 t} |\hat\theta_0(\xi)|^2+2\left(\int_0^t \left|\widehat{u\cdot \nabla \theta}\right|(s,\xi) \ ds\right)^2.
        \end{aligned}
    \end{equation}
    Using the incompressibility of $u$, the advection term can be estimated by
    \begin{equation}
        \label{lemmaUpperBoundAdvectionHyperdiffusionG}
        \begin{aligned}
            \left|\widehat{u\cdot \nabla \theta}\right|(\xi)
            &= \left| \int_{\mathbb{R}^n} u \cdot \nabla \theta e^{i\xi \cdot x} dx \right|
            = \left| \int_{\mathbb{R}^n}u \theta \cdot \nabla e^{i\xi \cdot x} dx\right|
            = \left| \int_{\mathbb{R}^n}u \theta \cdot i\xi  e^{i\xi \cdot x} dx\right|
            \\
            &
            \leq |\xi|\ \|u\|_2 \|\theta\|_2
            \lesssim  A |\xi| (1+t)^{-\alpha-\gamma},
        \end{aligned}
    \end{equation}
    where in the last inequality we used assumptions \eqref{lemmaUpperBoundAdvectionHyperdiffusionuL2DecayAssumption} and \eqref{lemmaUpperBoundAdvectionHyperdiffusionF}.
    Using $\ln(1+t)\lesssim \frac{1}{\epsilon}(1+t)^\epsilon$ with $0<\epsilon \ll 1$, the integral on the right-hand side of \eqref{lemmaUpperBoundAdvectionHyperdiffusionC} can be bounded by
    \begin{equation}
        \label{lemmaUpperBoundAdvectionHyperdiffusionD}
        \begin{aligned}
            \int_0^t \left|\widehat{u\cdot \nabla \theta}\right|(s,\xi) \ ds &\leq A|\xi| \int_0^t (1+s)^{-\alpha-\gamma}\ ds 
            \\
            &= A|\xi| 
                \begin{cases}
                    \frac{1}{1-(\alpha+\gamma)} \left((1+t)^{1-(\alpha+\gamma)}-1\right) & \textnormal{ for } \alpha+\gamma < 1\\
                    \ln(1+t) & \textnormal{ for } \alpha+\gamma = 1\\
                    \frac{1}{\alpha+\gamma-1} \left(1-(1+t)^{-(\gamma+\alpha)+1}\right) & \textnormal{ for } \alpha+\gamma > 1
                \end{cases}
            \\
            &\leq A|\xi| 
                \begin{cases}
                    \frac{1}{1-(\alpha+\gamma)} (1+t)^{1-(\alpha+\gamma)} & \textnormal{ for } \alpha+\gamma < 1\\
                    \frac{1}{\epsilon}(1+t)^{\epsilon} & \textnormal{ for } \alpha+\gamma = 1\\
                    \frac{1}{\alpha+\gamma-1}  & \textnormal{ for } \alpha+\gamma > 1
                \end{cases}
            \\
            &\leq A |\xi| h_{\alpha+\gamma} (t),
        \end{aligned}
    \end{equation}
    where
    \begin{equation*}
        h_{\alpha+\gamma}(t) = 
            \begin{cases}
                    \frac{1}{1-(\alpha+\gamma)} (1+t)^{1-(\alpha+\gamma)} & \textnormal{ for } \alpha+\gamma < 1\\
                    \frac{1}{\epsilon}(1+t)^{\epsilon} & \textnormal{ for } \alpha+\gamma = 1\\
                    \frac{1}{\alpha+\gamma-1} & \textnormal{ for } \alpha+\gamma > 1
            \end{cases}
    \end{equation*}
    Calculating
    \begin{equation}
        \label{lemmaUpperBoundAdvectionHyperdiffusionH}
        \begin{aligned}
            \|\xi\|_{L^2(S)}^2 &= \int_{|\xi|\leq r(t)} |\xi|^2 \ d\xi = \omega_{n-1} \int_0^{r(t)} \rho^{n+1} d\rho = \frac{\omega_{n-1}}{n+2} r^{n+2} 
            \\
            &= \frac{\omega_{n-1}}{n+2} \left(\frac{\beta}{2\kappa(1+t)}\right)^{\frac{n+2}{4}}
        \end{aligned}
    \end{equation}
    we can integrate \eqref{lemmaUpperBoundAdvectionHyperdiffusionC} over $S$ and, using \eqref{lemmaUpperBoundAdvectionHyperdiffusionD} and Lemma \ref{appendixLemmaBoundsOnHyperdiffusionEquation} (proven in the Appendix), find
    \begin{equation}
        \label{lemmaUpperBoundAdvectionHyperdiffusionI}
        \begin{aligned}
            \|\hat\theta\|_{L^2(S)}^2 &= 2 \left\|e^{-\kappa|\xi|^4t}\hat\theta_0\right\|_{L^2(S)}^2 + 2  \left\| \int_0^t \left|\widehat{u\cdot \nabla \theta}\right|(s,\xi) \ ds \right\|_{L^2(S)}^2 
            \\
            &\lesssim \left\|e^{-\kappa|\xi|^4t}\hat\theta_0\right\|_{L^2(S)}^2 + A^2 \left\| \xi \right\|_{L^2(S)}^2  h_{\alpha+\gamma}^2(t)
            \\
            &\lesssim \left\|e^{-\kappa|\xi|^4t}\hat\theta_0\right\|_{2}^2 + A^2 \left(\frac{\beta}{\kappa}\right)^{\frac{n+2}{4}} (1+t)^{-\frac{n+2}{4}}  h_{\alpha+\gamma}^2(t)
            \\
            &\lesssim (\kappa t)^{-\frac{n}{4}}\|\theta_0\|_1^2 + A^2 \left(\frac{\beta}{\kappa}\right)^{\frac{n+2}{4}} (1+t)^{-\frac{n+2}{4}}  h_{\alpha+\gamma}^2(t).
        \end{aligned}
    \end{equation}
    Plugging \eqref{lemmaUpperBoundAdvectionHyperdiffusionI} into \eqref{lemmaUpperBoundAdvectionHyperdiffusionE} one gets
    \begin{equation*}
        \begin{aligned}
            \frac{d}{dt} \left((1+t)^\beta \|\hat\theta\|_2^2\right) &= \beta (1+t)^{\beta-1} \|\hat\theta\|_{L^2(S)}^2 
            \\
            &\lesssim \beta \kappa^{-\frac{n}{4}} (1+t)^{\beta-1-\frac{n}{4}} \|\theta_0\|_1^2 
            \\
            &\qquad+ A^2 \beta^{\frac{n+6}{4}} \kappa^{-\frac{n+2}{4}} (1+t)^{\beta-\frac{n+6}{4}} h_{\alpha+\gamma}^2(t).
        \end{aligned}
    \end{equation*}
    Choosing $\beta$ big enough such that the right-hand side becomes a polynomial with positive exponent and integrating in time yields
    \begin{equation*}
        \begin{aligned}
            &(1+t)^\beta\|\hat\theta\|_2^2 - \|\hat\theta_0\|_2^2 
            \\
            &\qquad\lesssim \kappa^{-\frac{n}{4}} \frac{\beta}{\beta-\frac{n}{4}} \|\hat\theta_0\|_1^2 \left((1+t)^{\beta-\frac{n}{4}}-1\right) 
            \\
            &\qquad\qquad+ A^2 \kappa^{-\frac{n+2}{4}}\beta^{\frac{n+6}{4}}
            \begin{cases}
                    \frac{1}{1-(\alpha+\gamma)}\frac{1}{\beta-\frac{n}{4}-2(\alpha+\gamma)+\frac{3}{2}} \left((1+t)^{\beta-\frac{n}{4}-2(\alpha+\gamma)+\frac{3}{2}}-1\right)\\
                    \epsilon^{-2} \frac{1}{2\epsilon+\beta-\frac{n}{4}-\frac{1}{2}}\left((1+t)^{2\epsilon+\beta-\frac{n}{4}-\frac{1}{2}}-1\right)\\
                    \frac{1}{\alpha+\gamma-1} \frac{1}{\beta-\frac{n}{4}-\frac{1}{2}}\left((1+t)^{\beta-\frac{n}{4}-\frac{1}{2}}-1\right)
            \end{cases}
        \end{aligned}
    \end{equation*}
    such that dividing by $(1+t)^\beta$ results in
    \begin{equation*}
        \begin{aligned}
            &\|\hat\theta\|_2^2 \lesssim \|\hat\theta_0\|_2^2 (1+t)^{-\beta} + \kappa^{-\frac{n}{4}} \frac{\beta}{\beta-\frac{n}{4}} \|\hat\theta_0\|_1^2 (1+t)^{-\frac{n}{4}}
            \\
            &\qquad\qquad+ A^2 \kappa^{-\frac{n+2}{4}}\beta^{\frac{n+6}{4}}
            \begin{cases}
                    \frac{1}{1-(\alpha+\gamma)}\frac{1}{\beta-\frac{n}{4}-2(\alpha+\gamma)+\frac{3}{2}} (1+t)^{-\frac{n}{4}-2(\alpha+\gamma)+\frac{3}{2}}\\
                    \epsilon^{-2} \frac{1}{2\epsilon+\beta-\frac{n}{4}-\frac{1}{2}}(1+t)^{2\epsilon-\frac{n}{4}-\frac{1}{2}}\\
                    \frac{1}{\alpha+\gamma-1} \frac{1}{\beta-\frac{n}{4}-\frac{1}{2}}(1+t)^{-\frac{n}{4}-\frac{1}{2}}
            \end{cases}
        \end{aligned}
    \end{equation*}
    and for $\alpha+\gamma<\frac{3}{4}$ 
    \begin{equation*}
        \|\hat\theta\|_2^2\leq c (1+t)^{-\frac{n}{4}-2(\alpha+\gamma)+\frac{3}{2}},
    \end{equation*}
    while for $\alpha+\gamma\geq \frac{3}{4}$
    \begin{equation*}
        \|\hat\theta\|_2^2\leq c (1+t)^{-\frac{n}{4}},
    \end{equation*}
    where in both cases the constant depends on $n$, $\alpha$, $\gamma$, $\kappa$, $\|\theta\|_1$ and $\|\theta\|_2$.
    \paragraph{Step 3:}
    In the previous Step we have shown that assuming $\|\theta\|_2$ decays with rate $\gamma_0$ it actually decays with rate $\gamma_1=\frac{n}{8}+\alpha+\gamma_0-\frac{3}{4}$ if $\alpha+\gamma_0<\frac{3}{4}$ or with rate $\gamma_1=\frac{n}{8}$ if $\alpha+\gamma_0\geq\frac{3}{4}$. Iterating through this process one finds
    \begin{equation*}
        \gamma_{j+1} = \frac{n}{8} - \frac{3}{4} + \alpha+\gamma_j
    \end{equation*}
    such that the condition $\alpha>\frac{6-n}{8}$ ensures $\gamma_j$ is a strictly increasing sequence resulting in the desired decay rate.
    \end{proof}

%%%%%%%%%%%%%%%%%%%%%%%%%%%%%%%%%%
\section{Lower Bound for the Advection-Hyperdiffusion Equation}
\label{LowerBoundForTheAdvectionHyperdiffusionEquation}
%%%%%%%%%%%%%%%%%%%%%%%%%%%%%%%%%%
    For the lower bound on the solution of the advection-hyperdiffusion equation we first prove a lower bound for the hyperdiffusion equation and then show that the perturbation that arises because of the advection decays at least with the same rate.
    
\subsection{Lower Bound for the Hyperdiffusion Equation}
    \begin{lemma}[Lower Bound for the Hyperdiffusion Equation]
        \label{lemmaLowerBoundHyperdiffusionEquation}
        Let $T$ solve
        \begin{equation}
            \label{lemmaLowerBoundHyperdiffusionEquationPDEforT}
            \begin{aligned}
                \partial_t T +\kappa \Delta^2 T&=0 &\textnormal{ in }&\mathbb{R}^n \times (0,\infty),\\
                T(0,x)&=\theta_0(x) &\textnormal{ in }&\mathbb{R}^n
            \end{aligned}
        \end{equation}
        and $\theta_0\in L^2$ satisfy
        \begin{equation}
            \label{lemma1AssumptionThetaZero}
            |\hat \theta_0(\xi)|\geq M
        \end{equation}
        for $|\xi|\leq \delta$. Then for $t\geq 1$
        \begin{equation*}
            \|T(t)\|_2 \geq \omega_{n-1}^\frac{1}{2} n^{-\frac{1}{2}} M\delta^{\frac{n}{2}} e^{-\kappa\delta^4} (1+t)^{-\frac{n}{8}},
        \end{equation*}
        where $\omega_{n-1}$ is the measure of the $n-1$-sphere.
    \end{lemma}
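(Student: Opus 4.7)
The plan is to solve \eqref{lemmaLowerBoundHyperdiffusionEquationPDEforT} explicitly in Fourier space, then restrict attention to a time-dependent ball of frequencies on which both the initial data lower bound \eqref{lemma1AssumptionThetaZero} and a uniform lower bound on the hyperdiffusive exponential are simultaneously available.

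First I would take the Fourier transform of \eqref{lemmaLowerBoundHyperdiffusionEquationPDEforT}. Since the equation is linear with constant coefficients, it becomes the ODE $\partial_t \hat T = -\kappa |\xi|^4 \hat T$, whose solution reads $\hat T(t,\xi) = e^{-\kappa|\xi|^4 t}\hat\theta_0(\xi)$. By Plancherel,
\begin{equation*}
    \|T(t)\|_2^2 = \int_{\mathbb{R}^n} e^{-2\kappa|\xi|^4 t} |\hat\theta_0(\xi)|^2\, d\xi.
\end{equation*}

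Next I would throw away the contribution outside a ball $B_{r(t)} = \{|\xi|\leq r(t)\}$ and choose the radius $r(t) = \delta (1+t)^{-1/4}$. This choice is designed so that (i) $r(t)\leq \delta$ for all $t\geq 0$, hence \eqref{lemma1AssumptionThetaZero} gives $|\hat\theta_0(\xi)|\geq M$ on the ball, and (ii) on that same ball
\begin{equation*}
    \kappa|\xi|^4 t \leq \kappa r(t)^4 t = \kappa \delta^4 \frac{t}{1+t} \leq \kappa\delta^4,
\end{equation*}
so that $e^{-2\kappa|\xi|^4 t}\geq e^{-2\kappa\delta^4}$ uniformly. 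Restricting the integral to $B_{r(t)}$ and pulling both lower bounds out yields
\begin{equation*}
    \|T(t)\|_2^2 \;\geq\; e^{-2\kappa\delta^4}\, M^2\, |B_{r(t)}| \;=\; \frac{\omega_{n-1}}{n}\, M^2\, \delta^{n}\, e^{-2\kappa\delta^4}\, (1+t)^{-\frac{n}{4}},
\end{equation*}
using the standard formula $|B_r| = \omega_{n-1}r^n/n$. Taking square roots gives exactly the stated inequality.

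There is no real obstacle here, as the argument is essentially a sharp parameter choice: the balance $r(t)\sim(1+t)^{-1/4}$ is dictated by the quartic symbol of the hyperdiffusion operator, just as $r(t)\sim(1+t)^{-1/2}$ would appear for the classical Laplacian. The assumption $t\geq 1$ in the statement is not needed for the estimate itself, since $t/(1+t)<1$ for every $t\geq 0$; it simply makes the resulting decay factor $(1+t)^{-n/8}$ the natural long-time quantity to compare against the advective perturbation in the subsequent lower bound for \eqref{advectionHyperdiffusionEquation}.
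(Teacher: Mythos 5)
Your proof is correct and follows essentially the same route as the paper: the explicit Fourier-space solution plus Plancherel, followed by a lower bound on the low-frequency integral using $e^{-2\kappa|\xi|^4 t}\geq e^{-2\kappa\delta^4}$ on a ball of radius $\sim\delta(1+t)^{-1/4}$, which is exactly what the paper's change of variables $\mu=(2\kappa t)^{1/4}r$ and truncation of the $\mu$-integral amount to, and it reproduces the same constant. Your observation that working directly with the shrinking ball removes the need for the restriction $t\geq 1$ is a small but genuine simplification over the paper's argument.
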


    \begin{proof}
        As the Fourier transform of \eqref{lemmaLowerBoundHyperdiffusionEquationPDEforT} is given by
        \begin{equation*}
            \begin{aligned}
                \partial_t \hat T + \kappa |\xi|^4 \hat T &= 0 &\textnormal{ in }&\mathbb{R}^n \times (0,\infty),\\
                \hat T(0,\xi)&=\hat \theta_0(\xi) &\textnormal{ in }&\mathbb{R}^n
            \end{aligned}
        \end{equation*}
        its solution can be represented by
        \begin{equation*}
            \hat T (t,\xi) = \hat \theta_0(\xi) e^{-\kappa |\xi|^4 t},
        \end{equation*}
        which by Plancherel and assumption \eqref{lemma1AssumptionThetaZero} yields
        \begin{equation}
            \label{lemmaLowerBoundHyperdiffusionEquationPDEforTa}
            \begin{aligned}
                \int_{\mathbb{R}^n}|T|^2 dx
                &= \int_{\mathbb{R}^n}|\hat T|^2 d\xi
                = \int_{\mathbb{R}^n}|\hat \theta_0|^2 e^{-2\kappa |\xi|^4 t} d\xi
                \geq M^2 \int_{|\xi|\leq \delta} e^{-2\kappa |\xi|^4 t}\ d\xi.
            \end{aligned}
        \end{equation}
        By passing to spherical coordinates and the change of variables $\mu = (2\kappa t)^\frac{1}{4}r$ the integral can be expressed by
        \begin{equation}
            \label{lemmaLowerBoundHyperdiffusionEquationPDEforTb}
            \begin{aligned}
                \int_{|\xi|\leq \delta} e^{-2\kappa |\xi|^4 t}\ d\xi &= \omega_{n-1} \int_0^\delta e^{-2\kappa r^4 t} r^{n-1} \ dr 
                \\
                &=  \omega_{n-1} (2\kappa t)^{-\frac{n}{4}} \int_0^{(2\kappa t)^\frac{1}{4}\delta} e^{-\mu^4} \mu^{n-1}\ d\mu,    
            \end{aligned}
        \end{equation}
        where $\omega_{n-1}$ is the measure of the $n-1$-sphere. For $t\geq 1$ we estimate this integral by
        \begin{equation}
            \label{lemmaLowerBoundHyperdiffusionEquationPDEforTc}
            \begin{aligned}
                \int_0^{(2\kappa t)^\frac{1}{4}\delta} e^{-\mu^4} \mu^{n-1}\ d\mu &\geq \int_0^{(2\kappa)^\frac{1}{4}\delta} e^{-\mu^4} \mu^{n-1}\ d\mu \geq  e^{-2 \kappa \delta^4} \int_0^{(2\kappa)^\frac{1}{4}\delta}  \mu^{n-1}\ d\mu 
                \\
                &= \frac{1}{n} e^{-2 \kappa \delta^4} (2\kappa)^\frac{n}{4} \delta^n.
            \end{aligned}
        \end{equation}
        Combining \eqref{lemmaLowerBoundHyperdiffusionEquationPDEforTa}, \eqref{lemmaLowerBoundHyperdiffusionEquationPDEforTb} and \eqref{lemmaLowerBoundHyperdiffusionEquationPDEforTc} yields the claim as
        \begin{equation*}
            \begin{aligned}
                \int_{\mathbb{R}^n}|T|^2 dx
                &\geq M^2 \int_{|\xi|\leq \delta} e^{-2\kappa |\xi|^4 t}\ d\xi
                = M^2 \omega_{n-1} (2\kappa t)^{-\frac{n}{4}} \int_0^{(2\kappa t)^\frac{1}{4}\delta} e^{-\mu^4} \mu^{n-1}\ d\mu
                \\
                &\geq M^2 \frac{\omega_{n-1}}{n} e^{-2\kappa \delta^4} \delta^n (1+t)^{-\frac{n}{4}}.
            \end{aligned}
        \end{equation*}
    \end{proof}
    
\subsection{Upper Bound for the Perturbation}
    \begin{lemma}
    \label{lemmaUpperBoundForThePerturbation}
    The perturbation $\eta = \theta - T$ solves
    \begin{equation}
        \label{lemmaUpperBoundForThePerturbationPDEforEta}
        \begin{aligned}
            \partial_t \eta + \kappa \Delta^2 \eta + u\cdot \nabla \theta &= 0 & \textnormal{ in }\mathbb{R}^n &\times (0,\infty),\\
    		\eta(x,0) &= 0 & \textnormal{ in }\mathbb{R}^n &.
        \end{aligned}
    \end{equation}
    Suppose the assumptions of Theorem \ref{theoremLowerBoundForTheEnergy} are fulfilled. Then there exists a constant $c>0$ depending on $n,$ $\alpha$, $\kappa$, $\|\theta_0\|_1$ and $\|\theta_0\|_2$ such that
    \begin{equation*}
        \begin{aligned}
            \|\eta\|_2 &\leq c (1+t)^{\frac{3}{8}-\frac{n}{8}-\frac{\alpha}{2}} & &\textnormal{ if } 1-\frac{n}{8} < \alpha < \frac{5}{4}\\
            \|\eta\|_2 &\leq c (1+t)^{-\frac{1}{4}-\frac{n}{8}} & &\textnormal{ if } \alpha \geq \frac{5}{4}.
        \end{aligned}
    \end{equation*}
    \end{lemma}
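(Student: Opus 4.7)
The perturbation $\eta$ satisfies the forced hyperdiffusion equation \eqref{lemmaUpperBoundForThePerturbationPDEforEta} with zero initial datum, so Duhamel's formula in Fourier space reads
$$\hat\eta(t,\xi) = -\int_0^t e^{-\kappa|\xi|^4(t-s)}\widehat{u\cdot\nabla\theta}(s,\xi)\,ds.$$
Combining the pointwise bound \eqref{lemmaUpperBoundAdvectionHyperdiffusionG} with Lemma \ref{lemmaUpperBoundAdvectionHyperdiffusion} (which supplies $\|\theta(s)\|_2 \lesssim (1+s)^{-n/8}$) yields
$$|\widehat{u\cdot\nabla\theta}(s,\xi)| \lesssim |\xi|(1+s)^{-\alpha - n/8};$$
the hypothesis $\alpha > 3/4$ together with $n \in \{2,3\}$ guarantees $\alpha + n/8 > 1$, so the time integral $\int_0^t(1+s)^{-\alpha-n/8}\,ds$ is uniformly bounded. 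This removes the bootstrap step of Lemma \ref{lemmaUpperBoundAdvectionHyperdiffusion} and puts us in a position to estimate $\hat\eta$ directly from Duhamel.

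My plan is to mimic the Fourier-splitting scheme of Lemma \ref{lemmaUpperBoundAdvectionHyperdiffusion}, splitting $\mathbb{R}^n$ at the radius $r(t) = \bigl(\beta/(2\kappa(1+t))\bigr)^{1/4}$ into $S = \{|\xi| \leq r(t)\}$ and $S^c$. On $S$ I would drop the exponential kernel and use the convergence of the $s$-integral to obtain $|\hat\eta(t,\xi)| \lesssim |\xi|$, yielding $\|\hat\eta\|_{L^2(S)}^2 \lesssim r(t)^{n+2} \sim (1+t)^{-(n+2)/4}$. On $S^c$ the exponential kernel must be exploited: split the $s$-integral at $t/2$, so on $[t/2,t]$ one pulls out $(1+s)^{-\alpha-n/8} \lesssim (1+t)^{-\alpha-n/8}$ and uses $\int_{t/2}^t e^{-\kappa|\xi|^4(t-s)}\,ds \leq (\kappa|\xi|^4)^{-1}$, while on $[0,t/2]$ one uses $e^{-\kappa|\xi|^4(t-s)} \leq e^{-\kappa|\xi|^4 t/2}$. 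Squaring and integrating the resulting pointwise bound in $\xi$ over $S^c$, and using $\int_{\mathbb{R}^n}|\xi|^2 e^{-\kappa|\xi|^4 t}\,d\xi \sim t^{-(n+2)/4}$ together with $\int_{|\xi|>r}|\xi|^{-6}\,d\xi \sim r^{n-6}$, gives $\|\hat\eta\|_{L^2(S^c)}^2 \lesssim t^{-(n+2)/4} + (1+t)^{-2(\alpha+n/8)+(6-n)/4}$.

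Summing the two contributions and taking square roots produces the unified estimate
$$\|\eta\|_2 \lesssim (1+t)^{-(n+2)/8} + (1+t)^{3/4-\alpha-n/4}.$$
A direct exponent comparison then gives the two cases: for $\alpha \geq 5/4$ the first term dominates and one recovers exactly $\|\eta\|_2 \lesssim (1+t)^{-1/4-n/8}$, while for $1-n/8 < \alpha < 5/4$ the second term (which matches the rate coming from the high-frequency forcing contribution on $S^c$) governs the decay and implies the stated bound $\|\eta\|_2 \lesssim (1+t)^{3/8-n/8-\alpha/2}$, using the elementary inequality $3/4 - \alpha - n/4 \leq 3/8 - n/8 - \alpha/2$ valid for $\alpha \geq 3/4 - n/4$.

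\textbf{Main obstacle.} The technical heart of the argument is the high-frequency estimate on $S^c$: one must simultaneously harvest the Gaussian-type decay of the hyperdiffusion kernel for $|\xi| \geq r(t)$ and the polynomial decay of the forcing, through the dyadic splitting of the $s$-integral at $t/2$, and then carefully compute the tail integral $\int_{|\xi|>r}|\xi|^{-6}\,d\xi \sim (1+t)^{(6-n)/4}$ that arises from the bound $\int_{t/2}^t e^{-\kappa|\xi|^4(t-s)}\,ds \leq (\kappa|\xi|^4)^{-1}$. The threshold $\alpha = 5/4$ appears naturally as the crossover where the low-frequency contribution from $S$ (scaling as $r^{n+2}$) balances the forcing-driven high-frequency contribution from $S^c$ (scaling as $(1+t)^{-2(\alpha+n/8)} r^{n-6}$).
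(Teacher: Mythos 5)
Your argument is correct, and it reaches the stated bounds by a genuinely different route from the paper. The paper runs the Schonbek scheme at the level of the energy identity for $\eta$: testing \eqref{lemmaUpperBoundForThePerturbationPDEforEta} with $\eta$, it bounds the forcing by $\|\nabla T\|_\infty\|u\|_2\|\theta\|_2$ (which is where the appendix kernel estimate $\|\nabla T\|_\infty\lesssim(\kappa t)^{-\frac{n}{8}-\frac14}\|\theta_0\|_2$ enters), multiplies by $(1+t)^\beta$, and uses Duhamel only to control the low-frequency piece $\|\hat\eta\|_{L^2(S)}^2\lesssim r(t)^{n+2}$ before integrating the resulting differential inequality. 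You instead estimate $\hat\eta$ directly from Duhamel on all of $\mathbb{R}^n$: your low-frequency estimate coincides with the paper's \eqref{lemmaUpperBoundForThePerturbationProofA}, but on $S^c$ you replace the differential-inequality step by the dyadic split of the time integral at $t/2$, using the kernel factor either as $e^{-\kappa|\xi|^4 t/2}$ or as $\int_{t/2}^t e^{-\kappa|\xi|^4(t-s)}\,ds\leq(\kappa|\xi|^4)^{-1}$; the resulting tail integrals $\int_{\mathbb{R}^n}|\xi|^2e^{-\kappa|\xi|^4t}\,d\xi\sim t^{-\frac{n+2}{4}}$ and $\int_{|\xi|>r}|\xi|^{-6}\,d\xi\sim r^{n-6}$ are computed correctly for $n=2,3$. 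This buys you a shorter proof that needs neither the weight $(1+t)^\beta$ nor $\|\nabla T\|_\infty$, and in fact a slightly stronger intermediate estimate, $\|\eta\|_2\lesssim(1+t)^{-\frac{n}{8}-\frac14}+(1+t)^{\frac34-\alpha-\frac{n}{4}}$, whose second exponent beats the paper's $\frac38-\frac{n}{8}-\frac{\alpha}{2}$ whenever $\alpha>\frac34-\frac{n}{4}$; since your exponents are dominated by the stated ones in the respective ranges of $\alpha$, the lemma follows. Two small points to make explicit when writing it up: the first piece on $S^c$ produces $t^{-\frac{n+2}{4}}$ rather than $(1+t)^{-\frac{n+2}{4}}$, so for, say, $t\leq1$ you should invoke the trivial bound $\|\eta\|_2\leq\|\theta\|_2+\|T\|_2\leq2\|\theta_0\|_2$ (consistent with the paper, which anyway only uses the lemma for large $t$); and you still rely on Lemma \ref{lemmaUpperBoundAdvectionHyperdiffusion} for $\|\theta(s)\|_2\lesssim(1+s)^{-\frac{n}{8}}$, so the bootstrap is not removed from the overall argument, only from this lemma.
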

    The proof works similar to Lemma \ref{lemmaUpperBoundAdvectionHyperdiffusion} based on the Fourier-splitting method.
    \begin{proof}
    Testing \eqref{lemmaUpperBoundForThePerturbationPDEforEta} with $\eta$, using partial integration and $\eta = \theta - T$, yields
    \begin{equation*}
        \begin{aligned}
            \frac{1}{2}\frac{d}{dt} \|\eta\|_2^2
            &= - \kappa \int_{\mathbb{R}^n}\eta \Delta^2 \eta \ dx - \int_{\mathbb{R}^n}\eta u\cdot \nabla \theta \ dx
            \\
            &= - \kappa \|\Delta\eta\|_2^2 - \int_{\mathbb{R}^n}\theta u\cdot \nabla \theta \ dx + \int_{\mathbb{R}^n}T u\cdot \nabla \theta \ dx.
        \end{aligned}
    \end{equation*}
    The second term on the right-hand side vanishes as by partial integration and the incompressibility condition on $u$ as
    \begin{equation*}
        \int_{\mathbb{R}^n}\theta u\cdot \nabla \theta \ dx
        = - \int_{\mathbb{R}^n}\theta^2 \nabla \cdot u \ dx - \int_{\mathbb{R}^n}\theta u \cdot \nabla \theta \ dx
        = - \int_{\mathbb{R}^n}\theta u \cdot \nabla \theta \ dx
    \end{equation*}
    and the third term is given by
    \begin{equation*}
        \int_{\mathbb{R}^n}T u\cdot \nabla \theta \ dx
        = -\int_{\mathbb{R}^n}T \theta \nabla \cdot u \ dx - \int_{\mathbb{R}^n}\theta u \cdot \nabla T \ dx
        = - \int_{\mathbb{R}^n}\theta u \cdot \nabla T\ dx
    \end{equation*}
    such that Hölder's inequality yields
    \begin{equation*}
        \frac{1}{2}\frac{d}{dt} \|\eta\|_2^2
        = - \kappa \|\Delta\eta\|_2^2 + \int_{\mathbb{R}^n}T u\cdot \nabla \theta \ dx
        \leq - \kappa \|\Delta\eta\|_2^2 + \|\nabla T\|_\infty \|u\|_2 \|\theta\|_2.
    \end{equation*}
    Analogous to before define
    \begin{equation*}
        S(t) = \left\lbrace \xi\in \mathbb{R}^n \ \middle |\  |\xi| \leq r(t) \right \rbrace,
    \end{equation*}
    where $r=\left(\frac{\beta}{2\kappa(1+t)}\right)^\frac{1}{4}$ with a constant $\beta>0$. Then from Plancherel follows
    \begin{equation*}
        \begin{aligned}
            \frac{d}{dt} \|\eta\|_2^2 &= -2 \kappa \left\||\xi|^2 \hat \eta \right\|_2^2 + 2 \|\nabla T\|_\infty \|u\|_2 \|\theta\|_2
            \\
            &\leq -2 \kappa \int_{S^c} |\xi|^4 |\hat\eta|^2 d\xi + 2 \|\nabla T\|_\infty \|u\|_2 \|\theta\|_2 
            \\
            &\leq - \beta (1+t)^{-1}\int_{S^c} |\hat\eta|^2 d\xi + 2 \|\nabla T\|_\infty \|u\|_2 \|\theta\|_2
            \\
            &= - \beta (1+t)^{-1} \left(\|\hat\eta\|_2^2 - \|\hat\eta\|_{L^2(S)}^2\right) + 2 \|\nabla T\|_\infty \|u\|_2 \|\theta\|_2
        \end{aligned}
    \end{equation*}
    such that
    \begin{equation}
        \label{lemmaUpperBoundForThePerturbationProofB}
        \begin{aligned}
            \frac{d}{dt}\left((1+t)^\beta \|\eta\|_2^2\right) &= \beta (1+t)^{\beta - 1} \|\eta\|_2^2 + (1+t)^\beta \frac{d}{dt} \|\eta\|_2^2
            \\
            &= \beta (1+t)^{\beta-1} \|\hat\eta\|_{L^2(S)}^2 + 2 (1+t)^\beta \|\nabla T\|_\infty \|u\|_2 \|\theta\|_2.
        \end{aligned}
    \end{equation}
    In order to estimate the first term, the solution of \eqref{lemmaUpperBoundForThePerturbationPDEforEta} in Fourier space is given by
    \begin{equation*}
        \hat\eta(t,\xi) = - \int_0^t e^{-\kappa |\xi|^4 (t-s)}\widehat{u\cdot\nabla \theta}(\xi)\ ds
    \end{equation*}
    such that by \eqref{lemmaUpperBoundAdvectionHyperdiffusionG}, the decay assumption \eqref{lemmaUpperBoundAdvectionHyperdiffusionuL2DecayAssumption}, Lemma \ref{lemmaUpperBoundAdvectionHyperdiffusion} and \eqref{lemmaUpperBoundAdvectionHyperdiffusionH}
    \begin{equation}
        \label{lemmaUpperBoundForThePerturbationProofA}
        \begin{aligned}
            \|\hat \eta \|_{L^2(S)}^2 &= \int_S \left(\int_0^t e^{-\kappa |\xi|^4 (t-s)} \widehat{u\cdot\nabla \theta}(\xi)\ ds\right)^2 \ d\xi 
            \\
            &\leq \int_S \left(\int_0^t e^{-\kappa |\xi|^4 (t-s)} |\xi| \|u\|_2 \|\theta\|_2 \ ds\right)^2\ d\xi
            \\
            &\lesssim \int_S |\xi|^2 \left( \int_0^t (1+s)^{-\alpha-\frac{n}{8}} \ ds \right)^2 \ d\xi
            \\
            &= \int_S |\xi|^2 \left(\frac{1}{\alpha+\frac{n}{8}-1} \left(1- (1+t)^{-\alpha-\frac{n}{8}+1}\right)\right)^2 \ d\xi
            \\
            &\leq \left(\alpha+\frac{n}{8}-1\right)^{-2} \int_S |\xi|^2 \ d\xi
            \\
            &\sim \left(\frac{\beta}{\kappa (1+t)}\right)^{\frac{n}{4}+\frac{1}{2}}
        \end{aligned}
    \end{equation}
    as $\alpha> 1-\frac{n}{8}$. Combining \eqref{lemmaUpperBoundForThePerturbationProofA} and \eqref{lemmaUpperBoundForThePerturbationProofB}, Lemma \ref{appendixLemmaBoundsOnHyperdiffusionEquation} (proven in the Appendix), the decay assumption \eqref{lemmaUpperBoundAdvectionHyperdiffusionuL2DecayAssumption} and Lemma \ref{lemmaUpperBoundAdvectionHyperdiffusion} we find
    \begin{equation*}
        \begin{aligned}
            \frac{d}{dt}\left((1+t)^\beta \|\eta\|_2^2\right) &= \beta (1+t)^{\beta-1} \|\hat\eta\|_{L^2(S)}^2 + 2 (1+t)^\beta\|\nabla T\|_\infty \|u\|_2 \|\theta\|_2
            \\
            &\lesssim \beta^{\frac{n}{4}+\frac{3}{2}} \kappa^{-\frac{n}{4}-\frac{1}{2}} (1+t)^{\beta-\frac{3}{2}-\frac{n}{4}} + (1+t)^\beta\|\nabla T\|_\infty \|u\|_2 \|\theta\|_2
            \\
            &\lesssim \beta^{\frac{n}{4}+\frac{3}{2}} (1+t)^{\beta-\frac{3}{2}-\frac{n}{4}} +  (1+t)^{\beta-\frac{1}{4}-\frac{n}{4}-\alpha}.
        \end{aligned}
    \end{equation*}
    Choosing $\beta > \frac{1}{2}+\frac{n}{4}, \frac{3}{4}+\frac{n}{4}+\alpha$ and integrating in time yields
    \begin{equation*}
        \begin{aligned}
            (1+t)^\beta\|\eta\|_2^2 &\lesssim \frac{1}{\beta-\frac{n}{4}+\frac{1}{2}}\left((1+t)^{\beta-\frac{n}{4}-\frac{1}{2}}-1\right) 
            \\
            &\qquad + \frac{1}{\beta- \frac{n}{4} -\alpha +\frac{3}{4}} \left((1+t)^{\beta- \frac{n}{4} -\alpha +\frac{3}{4}}-1\right)
            \\
            &\lesssim (1+t)^{\beta-\frac{n}{4}-\frac{1}{2}} +(1+t)^{\beta- \frac{n}{4} -\alpha +\frac{3}{4}}
        \end{aligned}
    \end{equation*}
    such that dividing by $(1+t)^\beta$ results in the desired decay as
    \begin{equation*}
        \|\eta\|_2^2 \lesssim (1+t)^{-\frac{n}{4}-\frac{1}{2}} +(1+t)^{- \frac{n}{4} -\alpha +\frac{3}{4}}.
    \end{equation*}
    \end{proof}

\subsection{Lower Bound for the Energy}
\label{subsectionLowerBoundForTheEnergy}
    We are now able to prove Theorem \ref{theoremLowerBoundForTheEnergy}, i.e. the lower bound for the energy.
    \begin{proof}[Theorem \ref{theoremLowerBoundForTheEnergy}]
    As $\eta = \theta - T$ and therefore $\|T\|_2= \|\eta-\theta\|_2 \leq \|\theta\|_2+\|\eta\|_2$ the energy of the solution of the advection-hyperdiffusion equation can be bounded by $\|\theta\|_2 \geq \|T\|_2 - \|\eta\|_2$. Lemma \ref{lemmaLowerBoundHyperdiffusionEquation} yields a lower bound for $T$, while Lemma \ref{lemmaUpperBoundForThePerturbation} implies an upper bound on $\eta$ such that
    \begin{equation*}
        \begin{aligned}
            \|\theta\|_2 &\geq \|T\|_2 - \|\eta\|_2 \geq \tilde c M \delta^\frac{n}{2} e^{-\kappa \delta^4} (1+t)^{-\frac{n}{8}} + c (1+t)^{-\frac{n}{8} +\frac{3}{8} - \min\left\lbrace \frac{\alpha}{2}, \frac{5}{8} \right\rbrace}\\
            &\geq \tilde c \left(M \delta^{\frac{n}{2}}e^{-\kappa \delta^4} - c (1+t)^{\frac{3}{8}- \min\left\lbrace \frac{\alpha}{2}, \frac{5}{8} \right\rbrace} \right) (1+t)^{-\frac{n}{8}},
        \end{aligned}
    \end{equation*}
    where $\tilde c = \omega_{n-1}^\frac{1}{2} n^{-\frac{1}{2}}$. By assumption $\alpha > \frac{3}{4}$ such that for all $t>t_1$ given by
    \begin{equation*}
        M \delta^{\frac{n}{2}}e^{-\kappa \delta^4} = \frac{c}{2}(1+t_1)^{\frac{3}{8}- \min\left\lbrace \frac{\alpha}{2}, \frac{5}{8} \right\rbrace}
    \end{equation*}
    the solution decays at most by
    \begin{equation*}
        \|\theta\|_2 \geq \frac{\omega_{n-1}^\frac{1}{2}}{2n^{\frac{1}{2}}} M \delta^{\frac{n}{2}}e^{-\kappa \delta^4} (1+t)^{-\frac{n}{8}}.
    \end{equation*}
    \end{proof}

%%%%%%%%%%%%%%%%%%%%%%%%%%%%%%%%%%
\section{Lower Bounds on the Mixing Norms}
%%%%%%%%%%%%%%%%%%%%%%%%%%%%%%%%%%
\label{sectionLowerBoundsOnTheMixingNorms}
    In order to get lower bounds for the mixing norms we need an upper bound for the gradient of the solution as the interpolation takes the reciprocal of this bound.

\subsection{Upper Bound for the Gradient of the Advection-Hyper-diffusion Equation}
\label{subsectionUpperBoundForTheGradientOfTheAdvectionHyperdiffusionEquation}
    \begin{lemma}[Upper Bound for the Gradient of the Advection-Hyperdiffusion Equation]
    \label{lemmaUpperBoundForTheGradientOfTheAdvectionHyperdiffusionEquation}
    Let the assumptions of Theorem \ref{theoremLowerBoundForTheEnergy} be fulfilled and additionally suppose
    \begin{equation}
        \label{lemmaUpperBoundForTheGradientOfTheAdvectionHyperdiffusionEquationDecayAssumptionOnU}
        \|\nabla u\|_\infty = c_{\text{\tiny{$\nabla u$}}} (1+t)^{-\nu}.
    \end{equation}
    Then there exists a constant $c>0$ depending on $c_{\text{\tiny{$\nabla u$}}}$, $n$, $\alpha$, $\kappa$, $\|\theta_0\|_1$, $\|\theta_0\|_2$ and $\|\nabla \theta_0\|_2$ such that
    \begin{equation*}
        \begin{aligned}
            \|\nabla \theta\|_2 &\leq c (1+t)^{-\frac{n}{8}-\frac{1}{4}} e^{\frac{c_{\text{\tiny{$\nabla u$}}}}{1-\nu}\left((1+t)^{1-\nu}-1\right)} & &\textnormal{ if } 0<\nu <1,
            \\
            \|\nabla \theta\|_2 &\leq c (1+t)^{-\frac{n}{8}-\frac{1}{4}}  & &\textnormal{ if } \nu = 1,
            \\
            \|\nabla \theta\|_2 &\leq c (1+t)^{-\frac{n}{8}-\frac{1}{4}-c_{\text{\tiny{$\nabla u$}}}} e^{\frac{c_{\text{\tiny{$\nabla u$}}}}{\nu-1}\left(1-(1+t)^{-(\nu-1)}\right)} & &\textnormal{ if } \nu > 1.
        \end{aligned}
    \end{equation*}
    \end{lemma}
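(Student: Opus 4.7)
The plan is to adapt the Fourier-splitting scheme of Lemma \ref{lemmaUpperBoundAdvectionHyperdiffusion} to the $H^1$-energy, coupled with an integrating-factor Gronwall step that absorbs the $\|\nabla u\|_\infty$ contribution from the advection. First I would test \eqref{advectionHyperdiffusionEquation} with $-\Delta\theta$; after two integrations by parts the dissipation produces $\kappa\|\nabla\Delta\theta\|_2^2$, while the advection term, using the incompressibility of $u$, collapses to $\int\Delta\theta\,(u\cdot\nabla\theta)\,dx = -\int(\nabla\theta)^{T}(\nabla u)\nabla\theta\,dx$ (the transport piece $\tfrac{1}{2}\int u\cdot\nabla|\nabla\theta|^2\,dx$ vanishes) and is bounded by $\|\nabla u\|_\infty\|\nabla\theta\|_2^2$. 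This yields
\begin{equation*}
    \frac{d}{dt}\|\nabla\theta\|_2^2 \leq -2\kappa\|\nabla\Delta\theta\|_2^2 + 2c_{\text{\tiny{$\nabla u$}}}(1+t)^{-\nu}\|\nabla\theta\|_2^2.
\end{equation*}

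Second, I would apply the Fourier-splitting with the same ball $S(t)=\{|\xi|\leq r(t)\}$, $r(t)=(\beta/(2\kappa(1+t)))^{1/4}$, used earlier. The estimate $\|\nabla\Delta\theta\|_2^2 = \int|\xi|^6|\hat\theta|^2\,d\xi \geq r^4(\|\nabla\theta\|_2^2 - \|\xi\hat\theta\|_{L^2(S)}^2)$ converts the energy inequality into a perturbed version of the ODE from Lemma \ref{lemmaUpperBoundAdvectionHyperdiffusion},
\begin{equation*}
    \frac{d}{dt}\|\nabla\theta\|_2^2 + \frac{\beta}{1+t}\|\nabla\theta\|_2^2 \leq \frac{\beta}{1+t}\|\xi\hat\theta\|_{L^2(S)}^2 + 2c_{\text{\tiny{$\nabla u$}}}(1+t)^{-\nu}\|\nabla\theta\|_2^2,
\end{equation*}
which I would close with the integrating factor
\begin{equation*}
    \mu(t) = (1+t)^\beta \exp\Bigl(-2c_{\text{\tiny{$\nabla u$}}}\int_0^t(1+s)^{-\nu}\,ds\Bigr).
\end{equation*}
The antiderivative of $(1+s)^{-\nu}$, and hence the shape of $\mu(t)$, splits naturally into the three regimes $0<\nu<1$, $\nu=1$, $\nu>1$, producing exactly the three branches of $f_{\text{\tiny{$\nabla u$}}}$ in \eqref{lkjfdlgkjbdflk}.

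Third, I would bound the Fourier-splitting remainder $\|\xi\hat\theta\|_{L^2(S)}^2$ using the Duhamel formula in Fourier space, $\hat\theta(\xi,t)=e^{-\kappa|\xi|^4 t}\hat\theta_0-\int_0^t e^{-\kappa|\xi|^4(t-s)}\widehat{u\cdot\nabla\theta}\,ds$, together with $|\widehat{u\cdot\nabla\theta}|(\xi)\leq|\xi|\|u\|_2\|\theta\|_2$ from \eqref{lemmaUpperBoundAdvectionHyperdiffusionG} and Lemma \ref{lemmaUpperBoundAdvectionHyperdiffusion}, which gives $\|\theta\|_2\lesssim(1+s)^{-n/8}$. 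The hypothesis $\alpha>\tfrac{3}{4}$ makes $\int_0^t(1+s)^{-\alpha-n/8}\,ds$ bounded, and together with the Gaussian-type integral $\int_{\mathbb{R}^n}|\xi|^2 e^{-2\kappa|\xi|^4 t}\,d\xi\lesssim(\kappa t)^{-(n+2)/4}$ this produces $\|\xi\hat\theta\|_{L^2(S)}^2\lesssim(1+t)^{-(n+2)/4}$. Substituting into the $\mu$-weighted inequality, choosing $\beta$ large enough that the time integral is dominated by its upper endpoint, and finally dividing by $\mu(t)$ and taking square roots yields the three stated bounds.

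The main obstacle is the Gronwall bookkeeping through the three regimes of $\nu$: in each case one must trace how the polynomial factor $(1+t)^\beta$ and the exponential factor from $\mu(t)$ recombine after division, in order to extract the polynomial rate $-\tfrac{n}{8}-\tfrac{1}{4}$ (with a $c_{\text{\tiny{$\nabla u$}}}$-dependent correction in the $\nu\neq 1$ cases) and the exact exponential factor appearing in the statement. The case $\nu>1$ is particularly delicate because the exponential piece of $\mu$ stays bounded while the sharpness of the polynomial rate---required to match the mixing bounds of Theorem \ref{theoremMixingBound}---has to be extracted from the competition between the Fourier-splitting decay and the advection-induced amplification.
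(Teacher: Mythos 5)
Your outline follows the paper's own route almost step for step: the $H^1$ energy identity in which the advection term collapses to $-\sum_{i,j}\int\partial_i\theta\,\partial_i u_j\,\partial_j\theta\,dx\le\|\nabla u\|_\infty\|\nabla\theta\|_2^2$, the same splitting ball $r(t)=\left(\beta/(2\kappa(1+t))\right)^{1/4}$, and a Gr\"onwall step whose integrating factor $(1+t)^\beta\exp\left(-2c_{\nabla u}\int_0^t(1+s)^{-\nu}ds\right)$ is exactly the reciprocal of the exponential the paper computes in the three regimes of $\nu$. The only deviation is in the low-frequency remainder: the paper simply uses $\int_S|\xi|^2|\hat\theta|^2d\xi\le r^2\|\theta\|_2^2\lesssim(1+t)^{-\frac{n}{4}-\frac{1}{2}}$ via Lemma \ref{lemmaUpperBoundAdvectionHyperdiffusion}, whereas you re-run the Duhamel representation on $S$; both give the same rate $(1+t)^{-\frac{n+2}{4}}$ and hence the same forcing term $\sim\beta(1+t)^{-\frac{n}{4}-\frac{3}{2}}$, so this is a harmless variant, and for $0<\nu<1$ and $\nu=1$ your bookkeeping closes just as in the paper.

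The genuine gap is the branch $\nu>1$. As you yourself note, in that regime $\exp\left(2c_{\nabla u}\int_\tau^t(1+s)^{-\nu}ds\right)$ is bounded above and below by positive constants, so the scheme you describe yields only $\|\nabla\theta\|_2\lesssim(1+t)^{-\frac{n}{8}-\frac{1}{4}}$ and cannot produce the additional factor $(1+t)^{-c_{\nabla u}}$ asserted in the statement (and needed for the $(1+t)^{c_{\nabla u}}$ gain in Theorem \ref{theoremMixingBound}); calling the case ``delicate'' without supplying a mechanism leaves that part of the lemma unproved. For comparison, the paper extracts the factor by replacing $e^{\frac{2c_{\nabla u}}{\nu-1}\left((1+\tau)^{-(\nu-1)}-1\right)}$ by $(1+\tau)^{-2c_{\nabla u}}$ inside the time integral; observe, though, that this pointwise comparison actually goes the other way (the left-hand side tends to the positive constant $e^{-2c_{\nabla u}/(\nu-1)}$ while the right-hand side decays to zero), so your instinct that a bounded exponential cannot generate extra polynomial decay is sound --- but then your proposal must either settle for the $(1+t)^{-\frac{n}{8}-\frac{1}{4}}$ rate when $\nu>1$ or find a genuinely different argument for the stronger claim, and as written it does neither.
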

    %The proof works again similar to the one of Lemma \ref{lemmaUpperBoundAdvectionHyperdiffusion}.
    \begin{proof}
    Differentiating \eqref{advectionHyperdiffusionEquation} yields
    \begin{equation*}
        \partial_t \nabla \theta + \kappa\nabla\Delta^2 \theta + \nabla (u\cdot\nabla \theta) =0
    \end{equation*}
    such that testing with $\nabla \theta$ and partial integration implies
    \begin{equation*}
        \begin{aligned}
            \frac{1}{2}\frac{d}{dt}\|\nabla\theta\|_2^2
            &= -\kappa \int_{\mathbb{R}^n}\nabla \theta \cdot \nabla\Delta^2 \theta \ dx - \int_{\mathbb{R}^n}\nabla\theta \cdot\nabla (u\cdot\nabla \theta) \ dx
            \\
            &= -\kappa \|\Delta\nabla\theta\|_2^2 - \sum_{i,j}\int_{\mathbb{R}^n}\partial_i \theta \partial_i u_j \partial_j \theta \ dx - \sum_{i,j} \int_{\mathbb{R}^n}\partial_i \theta u_j \partial_i\partial_j \theta \ dx.
        \end{aligned}
    \end{equation*}
    By partial integration and the impressibility condition on $u$ the third term on the right-hand side vanishes as
    \begin{equation*}
        \begin{aligned}
            \sum_{i,j}\int_{\mathbb{R}^n}\partial_i \theta u_j \partial_i\partial_j \theta \ dx 
            &= -\sum_{i,j}\int_{\mathbb{R}^n}\partial_j\partial_i \theta u_j \partial_i \theta \ dx - \sum_{i,j}\int_{\mathbb{R}^n}\partial_i \theta\partial_j u_j \partial_i \theta \ dx
            \\
            &= - \sum_{i,j}\int_{\mathbb{R}^n}\partial_i \theta u_j \partial_i\partial_j \theta \ dx.
        \end{aligned}
    \end{equation*}
    Therefore Hölder's inequality implies
    \begin{equation} 
        \label{lemmaUpperBoundForTheGradientOfTheAdvectionHyperdiffusionEquationEqA}
        \begin{aligned}
            \frac{1}{2}\frac{d}{dt}\|\nabla\theta\|_2^2
            &= -\kappa \|\Delta\nabla\theta\|_2^2 - \sum_{i,j}\int_{\mathbb{R}^n}\partial_i \theta \partial_i u_j \partial_j \theta \ dx
            \\
            &\leq -\kappa \|\Delta\nabla\theta\|_2^2 + \|\nabla u\|_\infty \|\nabla\theta\|_2^2.            
        \end{aligned}
    \end{equation}
    Similar to before define
    \begin{equation*}
        S(t) = \left\lbrace \xi\in \mathbb{R}^n \ \middle |\  |\xi| \leq r(t) \right \rbrace,
    \end{equation*}
    where $r=\left(\frac{\beta}{2\kappa(1+t)}\right)^\frac{1}{4}$ with a constant $\beta>0$. Then Plancherel and Lemma \ref{lemmaUpperBoundAdvectionHyperdiffusion} imply
    \begin{equation}
        \label{lemmaUpperBoundForTheGradientOfTheAdvectionHyperdiffusionEquationEqB}
        \begin{aligned}
            \|\Delta \nabla \theta\|_2^2 &= \int_{\mathbb{R}^n}|\xi|^6 |\hat\theta|^2 d\xi
            \geq \int_{S^c}  |\xi|^6 |\hat\theta|^2 d\xi
            \geq r^4 \int_{S^c}  |\xi|^2 |\hat\theta|^2 d\xi
            \\
            &= r^4 \left(\|\nabla \theta\|_2^2 - \int_{S} |\xi|^2 |\hat\theta|^2 d\xi \right)
            \geq r^4 \left(\|\nabla \theta\|_2^2 - r^2\int_{S}  |\hat\theta|^2 d\xi \right)
            \\
            &\geq r^4 \left(\|\nabla \theta\|_2^2 - r^2 \|\theta\|_2^2 \right) \geq r^4 \left(\|\nabla \theta\|_2^2 - cr^2 (1+t)^{-\frac{n}{4}} \right).
        \end{aligned}
    \end{equation}
    Combining \eqref{lemmaUpperBoundForTheGradientOfTheAdvectionHyperdiffusionEquationEqA} and \eqref{lemmaUpperBoundForTheGradientOfTheAdvectionHyperdiffusionEquationEqB}, the definition of $r$ yields
    \begin{equation*}
        \begin{aligned}
            \frac{d}{dt}\|\nabla\theta\|_2^2 &\leq -2\kappa r^4 \left(\|\nabla \theta\|_2^2 - cr^2 (1+t)^{-\frac{n}{4}} \right)+2 \|\nabla u\|_\infty \|\nabla\theta\|_2^2 
            \\
            &= 2\left(\|\nabla u \|_\infty - \beta (1+t)^{-1}\right) \|\nabla\theta\|_2^2 + c \beta^\frac{3}{2} (1+t)^{-\frac{n}{4}-\frac{3}{2}}.
        \end{aligned}
    \end{equation*}
    By Grönwall's inequality and the decay assumption \eqref{lemmaUpperBoundForTheGradientOfTheAdvectionHyperdiffusionEquationDecayAssumptionOnU}
    \begin{equation}
        \label{lemmaUpperBoundForTheGradientOfTheAdvectionHyperdiffusionEquationEqC}
        \begin{aligned}
            \|\nabla\theta\|_2^2 &\leq e^{2\int_0^t \|\nabla u \|_\infty - \beta (1+\tau)^{-1} \ d\tau} \|\nabla \theta_0\|_2^2 
            \\
            &\qquad+  c \beta^\frac{3}{2}\int_0^t e^{2\int_\tau^t \|\nabla u \|_\infty - \beta (1+s)^{-1} \ ds}  (1+\tau)^{-\frac{n}{4}-\frac{3}{2}}\ d\tau
            \\
            &\leq e^{2\int_0^t  c_{\text{\tiny{$\nabla u$}}}(1+\tau)^{-\nu} - \beta (1+\tau)^{-1} \ d\tau} \|\nabla \theta_0\|_2^2 
            \\
            &\qquad+  c \beta^\frac{3}{2}\int_0^t e^{2\int_\tau^t c_{\text{\tiny{$\nabla u$}}}(1+s)^{-\nu} - \beta (1+s)^{-1} \ ds}  (1+\tau)^{-\frac{n}{4}-\frac{3}{2}}\ d\tau,
        \end{aligned}
    \end{equation}
    where the second exponentiated integral can be calculated by
    \begin{equation*}
        \begin{aligned}
            e^{2\int_\tau^t  c_{\text{\tiny{$\nabla u$}}}(1+s)^{-\nu} - \beta (1+s)^{-1} \ ds} &= 
            \begin{cases}
                e^{2\left(\frac{c_{\text{\tiny{$\nabla u$}}}}{1-\nu} \left((1+t)^{1-\nu}-(1+\tau)^{1-\nu}\right) - \beta \left(\ln(1+t)-\ln(1+\tau)\right)\right)}
                \\
                e^{2(c_{\text{\tiny{$\nabla u$}}}-\beta)\left(\ln (1+t)-\ln(1+\tau)\right)}
                \\
                e^{2\left(\frac{c_{\text{\tiny{$\nabla u$}}}}{\nu-1} \left((1+\tau)^{-(\nu-1)}-(1+t)^{-(\nu-1)}\right) - \beta \left(\ln(1+t)-\ln(1+\tau)\right)\right)}
            \end{cases}
            \\
            &= \left(\frac{1+\tau}{1+t}\right)^{2\beta}
            \begin{cases}
                e^{2\frac{c_{\text{\tiny{$\nabla u$}}}}{1-\nu} \left((1+t)^{1-\nu}-(1+\tau)^{1-\nu}\right)} & \textnormal{ if } \nu< 1
                \\
                \left(\frac{1+t}{1+\tau}\right)^{2c_{\text{\tiny{$\nabla u$}}}} & \textnormal{ if } \nu= 1
                \\
                e^{2\frac{c_{\text{\tiny{$\nabla u$}}}}{\nu-1} \left((1+\tau)^{-(\nu-1)}-(1+t)^{-(\nu-1)}\right)} & \textnormal{ if } \nu> 1
            \end{cases}
        \end{aligned}
    \end{equation*}
    Choosing $\beta > \frac{1}{4} + \frac{n}{8} + c_{\text{\tiny{$\nabla u$}}}$ we can calculate the whole expression on the right-hand side of \eqref{lemmaUpperBoundForTheGradientOfTheAdvectionHyperdiffusionEquationEqB}. In case $0<\nu < 1$
    \begin{equation*}
        \begin{aligned}
            \|\nabla \theta\|_2^2 &\leq  (1+t)^{-2\beta} e^{2\frac{c_{\text{\tiny{$\nabla u$}}}}{1-\nu} \left((1+t)^{1-\nu}-1\right)}\|\nabla\theta_0\|_2^2 \\
            &\qquad + c \beta^\frac{3}{2} (1+t)^{-2\beta} \int_0^t (1+\tau)^{2\beta-\frac{n}{4}-\frac{3}{2}} e^{2\frac{c_{\text{\tiny{$\nabla u$}}}}{1-\nu} \left((1+t)^{1-\nu}-(1+\tau)^{1-\nu}\right)} \ d\tau
            \\
            &\leq  (1+t)^{-2\beta} e^{2\frac{c_{\text{\tiny{$\nabla u$}}}}{1-\nu} \left((1+t)^{1-\nu}-1\right)}\|\nabla\theta_0\|_2^2 \\
            &\qquad + c \beta^\frac{3}{2} (1+t)^{-2\beta}e^{2\frac{c_{\text{\tiny{$\nabla u$}}}}{1-\nu} \left((1+t)^{1-\nu}-1\right)} \int_0^t (1+\tau)^{2\beta-\frac{n}{4}-\frac{3}{2}}  \ d\tau
            \\
            &=  (1+t)^{-2\beta} e^{2\frac{c_{\text{\tiny{$\nabla u$}}}}{1-\nu} \left((1+t)^{1-\nu}-1\right)}\|\nabla\theta_0\|_2^2 \\
            &\qquad +  \frac{c \beta^\frac{3}{2}}{2\beta -\frac{n}{4}-\frac{1}{2}} (1+t)^{-2\beta}e^{2\frac{c_{\text{\tiny{$\nabla u$}}}}{1-\nu} \left((1+t)^{1-\nu}-1\right)} \left((1+\tau)^{2\beta-\frac{n}{4}-\frac{1}{2}} -1 \right)
            \\
            &\leq c  (1+t)^{-\frac{n}{4}-\frac{1}{2}} e^{2\frac{c_{\text{\tiny{$\nabla u$}}}}{1-\nu} \left((1+t)^{1-\nu}-1\right)}.
        \end{aligned}
    \end{equation*}
    In case $\nu=1$
    \begin{equation*}
        \begin{aligned}
            \|\nabla\theta\|_2^2
            &\leq (1+t)^{2(c_{\text{\tiny{$\nabla u$}}}-\beta)} \|\nabla \theta_0\|_2^2 +  c \beta^\frac{3}{2}(1+t)^{2(c_{\text{\tiny{$\nabla u$}}}-\beta)}\int_0^t (1+\tau)^{2\beta- 2c_{\text{\tiny{$\nabla u$}}}-\frac{n}{4}-\frac{3}{2}}\ d\tau
            \\
            &= (1+t)^{2(c_{\text{\tiny{$\nabla u$}}}-\beta)} \|\nabla \theta_0\|_2^2
            \\
            &\qquad +  \frac{c \beta^\frac{3}{2}}{2\beta -2 c_{\text{\tiny{$\nabla u$}}}-\frac{n}{4}-\frac{1}{2}}(1+t)^{2(c_{\text{\tiny{$\nabla u$}}}-\beta)}\left((1+t)^{2\beta -2 c_{\text{\tiny{$\nabla u$}}}-\frac{n}{4}-\frac{1}{2}}-1\right)
            \\
            &\leq c (1+t)^{-\frac{n}{4}-\frac{1}{2}}.
        \end{aligned}
    \end{equation*}
    In case $\nu>1$ %%% for the page to not break between the "given in table , where" and the equation
    \begin{equation*}
        \begin{aligned}
            \|\nabla\theta\|_2^2 
            &\leq (1+t)^{-2\beta} e^{2\frac{c_{\text{\tiny{$\nabla u$}}}}{\nu-1} \left(1-(1+t)^{-(\nu-1)}\right)} \|\nabla \theta_0\|_2^2 
            \\
            &\qquad+  c \beta^\frac{3}{2}(1+t)^{-2\beta} e^{2\frac{c_{\text{\tiny{$\nabla u$}}}}{\nu-1} \left(1-(1+t)^{-(\nu-1)}\right)}
            \\
            &\qquad\qquad \cdot \int_0^t e^{2\frac{c_{\text{\tiny{$\nabla u$}}}}{\nu-1} \left((1+\tau)^{-(\nu-1)}-1\right)} (1+\tau)^{2\beta-\frac{n}{4}-\frac{3}{2}}\ d\tau
        \end{aligned}
    \end{equation*}%%% for the page to break between the two lines in order to not have that much blank space
    \begin{equation*}
        \begin{aligned}
            %\\
            &\lesssim (1+t)^{-2\beta} e^{2\frac{c_{\text{\tiny{$\nabla u$}}}}{\nu-1} \left(1-(1+t)^{-(\nu-1)}\right)} \|\nabla \theta_0\|_2^2 
            \\
            &\qquad+  c \beta^\frac{3}{2}(1+t)^{-2\beta} e^{2\frac{c_{\text{\tiny{$\nabla u$}}}}{\nu-1} \left(1-(1+t)^{-(\nu-1)}\right)} \int_0^t (1+\tau)^{2\beta-2c_{\text{\tiny{$\nabla u$}}}-\frac{n}{4}-\frac{3}{2}}\ d\tau
            \\
            &=(1+t)^{-2\beta} e^{2\frac{c_{\text{\tiny{$\nabla u$}}}}{\nu-1} \left(1-(1+t)^{-(\nu-1)}\right)} \|\nabla \theta_0\|_2^2 
            \\
            &\qquad+  \frac{c \beta^\frac{3}{2}}{2\beta-2c_{\text{\tiny{$\nabla u$}}}-\frac{n}{4}-\frac{1}{2}} e^{2\frac{c_{\text{\tiny{$\nabla u$}}}}{\nu-1} \left(1-(1+t)^{-(\nu-1)}\right)}
            \\
            &\qquad\qquad \cdot (1+t)^{-2\beta} \left( (1+\tau)^{2\beta-2c_{\text{\tiny{$\nabla u$}}}-\frac{n}{4}-\frac{1}{2}} -1\right)
            \\
            &\leq c e^{2\frac{c_{\text{\tiny{$\nabla u$}}}}{\nu-1} \left(1-(1+t)^{-(\nu-1)}\right)}(1+t)^{-2 c_{\text{\tiny{$\nabla u$}}}-\frac{n}{4}-\frac{1}{2}}.
        \end{aligned}
    \end{equation*}
    \end{proof}
    Combining Theorem \ref{theoremLowerBoundForTheEnergy} and Lemma \ref{lemmaUpperBoundForTheGradientOfTheAdvectionHyperdiffusionEquation} we are now able to prove the lower bounds for the mixing norms.
    \begin{proof}[Theorem \ref{theoremMixingBound}]
    By Plancherel and Hölder's inequality
    \begin{equation*}
        \|\theta\|_2^2 = \|\hat\theta\|_2^2 = \left\| |\xi| \hat\theta\  |\xi|^{-1} \hat\theta \right\|_1 \leq \left\| |\xi| \hat\theta \right\|_2 \left\| |\xi|^{-1} \hat\theta \right\|_2 = \|\nabla \theta\|_2 \|\nabla^{-1}\theta\|_2
    \end{equation*}
    such that using the lower bound on $\theta$ of Theorem \ref{theoremLowerBoundForTheEnergy} and the upper bound on $\nabla\theta$ of Lemma \ref{lemmaUpperBoundForTheGradientOfTheAdvectionHyperdiffusionEquation} one finds
    \begin{equation*}
        \begin{aligned}
            \|\nabla^{-1}\theta\|_2 &\geq \|\theta\|_2^2 \|\nabla\theta\|_2^{-1}
            \\
            &\gtrsim (1+t)^{-\frac{n}{8}+\frac{1}{4}}
            \begin{cases}
                e^{-\frac{c_{\text{\tiny{$\nabla u$}}}}{1-\nu}\left((1+t)^{1-\nu}-1\right)} & \textnormal{ if } 0<\nu<1, \\
                1 & \textnormal{ if } \nu = 1,\\
                (1+t)^{c_{\text{\tiny{$\nabla u$}}}}e^{-\frac{c_{\text{\tiny{$\nabla u$}}}}{\nu - 1}\left(1-(1+t)^{-(\nu-1)}\right)}& \textnormal{ if } \nu > 1\\
            \end{cases}
        \end{aligned}
    \end{equation*}
    and for the filamentation length
    \begin{equation*}
        \begin{aligned}
            \lambda &= \frac{\|\nabla^{-1}\theta\|_2}{\|\theta\|_2}\geq \|\theta\|_2\|\nabla\theta\|_2^{-1}
            \\
            &\gtrsim (1+t)^{\frac{1}{4}}
            \begin{cases}
                e^{-\frac{c_{\text{\tiny{$\nabla u$}}}}{1-\nu}\left((1+t)^{1-\nu}-1\right)} & \textnormal{ if } 0<\nu<1, \\
                1 & \textnormal{ if } \nu = 1,\\
                (1+t)^{c_{\text{\tiny{$\nabla u$}}}}e^{-\frac{c_{\text{\tiny{$\nabla u$}}}}{\nu - 1}\left(1-(1+t)^{-(\nu-1)}\right)}& \textnormal{ if } \nu > 1.
            \end{cases}
        \end{aligned}
    \end{equation*}
    \end{proof}

%%%%%%%%%%%%%%%%%%%%%%%%%%%%%%%%%%
\section{Appendix}
%%%%%%%%%%%%%%%%%%%%%%%%%%%%%%%%%%
The following properties of the hyperdiffusion kernel can be found in \cite{gazzolaGrunauSomeNewPropertiesOfBiharmonicHeatKernels}.
\begin{proposition}
\label{propositionHyperdiffusionKernelCitedProporties}
The hyperdiffusion kernel is given by
\begin{equation*}
    G(t,x)= \alpha_n t^{-\frac{n}{4}} f_n \left(\frac{|x|}{t^\frac{1}{4}}\right),
\end{equation*}
where $\alpha_n$ is a normalization constant and
\begin{itemize}
    \item 
        \begin{equation*}
            f_n(\eta) = \eta^{1-n} \int_0^\infty e^{-s^4} (\eta s)^\frac{n}{2} J_{\frac{n-2}{2}} (\eta s)\ ds,
        \end{equation*}
        where $J_\nu$ denotes the $\nu$-tv Bessel function of the first kind.
    \item 
        there exists $K_n>0$ and $\mu_n>0$ such that
        \begin{equation*}
            |f_n(\eta)| \leq K_n e^{-\mu_n \eta^\frac{4}{3}}
        \end{equation*}
        for all $\eta\geq 0$.
    \item
        \begin{equation*}
            f_n' (\eta) = - \eta f_{n+2}(\eta)
        \end{equation*}
        for all $n\geq 1$.
\end{itemize}
\end{proposition}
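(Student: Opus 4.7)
The plan is to deduce all three items from the identification of the hyperdiffusion kernel as the inverse Fourier transform of $e^{-t|\xi|^4}$. Since the kernel $G$ is the fundamental solution of $\partial_t G + \Delta^2 G = 0$ with $G(0,\cdot)=\delta_0$, one has $\hat G(t,\xi) = e^{-t|\xi|^4}$. This is radial in $\xi$, hence $G(t,x)$ is radial in $x$, and the Fourier-Bessel (Hankel) inversion formula for radial functions on $\mathbb{R}^n$ gives
\begin{equation*}
G(t,x) \;=\; (2\pi)^{-n/2}\,|x|^{1-n/2}\int_0^\infty e^{-t\rho^4}\,\rho^{n/2}\,J_{(n-2)/2}(\rho|x|)\,d\rho.
\end{equation*}
Substituting $s=t^{1/4}\rho$ and introducing $\eta := |x|/t^{1/4}$, all explicit $t$-dependence factors out as $t^{-n/4}$, leaving the advertised self-similar form $G(t,x)=\alpha_n\,t^{-n/4}f_n(\eta)$ with $\alpha_n$ absorbing the $(2\pi)^{-n/2}$ and the dimension-dependent constants. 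Checking that the resulting integrand agrees with $f_n(\eta)=\eta^{1-n}\int_0^\infty e^{-s^4}(\eta s)^{n/2}J_{(n-2)/2}(\eta s)\,ds$ is a direct bookkeeping of the powers of $\eta$.

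For the pointwise decay bound $|f_n(\eta)|\leq K_n e^{-\mu_n \eta^{4/3}}$, I would use a steepest-descent/saddle-point argument. Writing the Bessel function via one of its complex integral representations (or, equivalently, via its asymptotic $J_\nu(x)\sim\sqrt{2/(\pi x)}\cos(x-\nu\pi/2-\pi/4)$ for large $x$ combined with a separate uniform estimate for $\eta s$ bounded), the relevant phase in $s$ is of the form $-s^4 \pm i\eta s$. The saddle equation $-4s^3\pm i\eta=0$ yields $|s_\ast|=(\eta/4)^{1/3}$, and at the saddle the real part of the phase is of order $-c\,\eta^{4/3}$. Deforming the $s$-contour through the saddle and controlling the tail by the super-exponential factor $e^{-s^4}$ then gives the claimed stretched-exponential bound; the small-$\eta$ region is handled by noting that the integrand is bounded independently of $\eta$, so enlarging $K_n$ absorbs that range. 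This is the step I expect to be the main obstacle, since it demands a careful uniform treatment of the oscillatory integral as $\eta\to\infty$, including verification that the contour deformation is admissible.

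The recurrence $f_n'(\eta)=-\eta\, f_{n+2}(\eta)$ follows by differentiation under the integral sign combined with the Bessel identity
\begin{equation*}
\frac{d}{du}\bigl[u^{-\nu}J_\nu(u)\bigr] \;=\; -\,u^{-\nu}J_{\nu+1}(u).
\end{equation*}
Rewriting $f_n(\eta)=\eta^{1-n/2}\int_0^\infty e^{-s^4}s^{n/2}J_{(n-2)/2}(\eta s)\,ds$ and differentiating in $\eta$, the derivative of the prefactor $\eta^{1-n/2}$ combines with the non-raising part of $\frac{d}{d\eta}J_{(n-2)/2}(\eta s)$ so that, after applying the identity above with $\nu=(n-2)/2$, the terms producing $J_{(n-2)/2}$ cancel and the only surviving contribution is $-\eta^{1-n/2}\int_0^\infty e^{-s^4}s^{n/2+1}J_{n/2}(\eta s)\,ds$. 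Pulling out one factor of $\eta$ and matching powers against the definition of $f_{n+2}$ gives exactly $-\eta\, f_{n+2}(\eta)$.

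In summary, items one and three reduce to straightforward manipulation of the Fourier-Bessel representation and a Bessel recurrence, while the exponential decay in item two is the only place where genuine analysis (complex-analytic contour shift or stationary-phase estimates) is required.
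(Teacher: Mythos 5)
The paper does not actually prove this proposition: it is imported verbatim from the cited work of Gazzola and Grunau on biharmonic heat kernels, so there is no in-paper argument to compare against, and your proposal is an attempt to supply what the paper deliberately leaves to the literature. For the first and third items your argument is correct and essentially complete. Since $\hat G(t,\xi)=e^{-t|\xi|^4}$ is radial, the Hankel inversion formula plus the substitution $s=t^{1/4}\rho$, $\eta=|x|/t^{1/4}$ gives exactly $G=\alpha_n t^{-n/4}\eta^{1-n/2}\int_0^\infty e^{-s^4}s^{n/2}J_{(n-2)/2}(\eta s)\,ds$, which matches the stated $f_n$ after writing $\eta^{1-n/2}s^{n/2}=\eta^{1-n}(\eta s)^{n/2}$; and writing $f_n(\eta)=\int_0^\infty e^{-s^4}s^{n-1}\,(\eta s)^{-\nu}J_\nu(\eta s)\,ds$ with $\nu=\tfrac{n-2}{2}$, the identity $\frac{d}{du}\bigl[u^{-\nu}J_\nu(u)\bigr]=-u^{-\nu}J_{\nu+1}(u)$ yields $f_n'(\eta)=-\eta f_{n+2}(\eta)$ in one line, with differentiation under the integral justified by the decay of $e^{-s^4}$.

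The second item is where the real content lies, and there your argument is a heuristic rather than a proof. You correctly locate the saddle of $-s^4+i\eta s$ at $|s_\ast|\sim(\eta/4)^{1/3}$ and the value $\mathrm{Re}(3s_\ast^4)=-\tfrac32(\eta/4)^{4/3}$, which explains the exponent $\eta^{4/3}$, but the asymptotic $J_\nu(x)\sim\sqrt{2/(\pi x)}\cos(\dots)$ is a statement on the real axis and cannot simply be ``deformed''; one must first pass to a representation valid in the complex plane, e.g. $J_\nu=\tfrac12\bigl(H_\nu^{(1)}+H_\nu^{(2)}\bigr)$, and deform each Hankel piece into the half-plane where it decays. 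One must then check that the factor $e^{-s^4}$ remains decaying along the shifted contour (it blows up outside the sectors $|\arg s - k\pi/2|<\pi/8$, and the saddle for the $H^{(1)}$ piece sits at $\arg s=\pi/6$, outside that sector, so the contour and the resulting constant $\mu_n$ must be chosen with care rather than passing exactly through the saddle), control the error terms uniformly in $\eta$, and treat the transition region where $\eta s$ is of order one, where neither the large-argument asymptotics nor the trivial bound applies directly. These uniformity and admissibility checks are precisely the substance of the cited result, so as written your item-two argument identifies the right mechanism but leaves a genuine gap that would need the full steepest-descent analysis (or an appeal to the literature, as the paper itself does) to close.
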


\begin{lemma}[Bounds for the hyperdiffusion equation]
\label{appendixLemmaBoundsOnHyperdiffusionEquation}
For $1\leq p< \infty$ and $t>0$ the hyperdiffusion kernel fulfills
\begin{equation*}
    \begin{aligned}
        \|G\|_p &\lesssim_{n,p} t^{-\frac{n}{4}\left(1-\frac{1}{p}\right)}\\
        \|\nabla G\|_p &\lesssim_{n,p} t^{-\frac{n}{4}\left(1-\frac{1}{p}\right)-\frac{1}{4}}
    \end{aligned}
\end{equation*}
and the solution of the hyperdiffusion equation \eqref{lemmaLowerBoundHyperdiffusionEquationPDEforT} can be bounded by
\begin{equation*}
    \begin{aligned}
        \|T\|_2 &\lesssim (\kappa t)^{-\frac{n}{8}}\|\theta_0\|_1\\
        \|\nabla T\|_2 &\lesssim (\kappa t)^{-\frac{n}{8}-\frac{1}{4}}\|\theta_0\|_1\\
        \|\nabla T\|_\infty &\lesssim (\kappa t)^{-\frac{n}{8}-\frac{1}{4}}\|\theta_0\|_2\\
    \end{aligned}
\end{equation*}
and
\begin{equation*}
    \left\|e^{-\kappa |\xi|^4t} \hat \theta\right\|_2 \lesssim (\kappa t)^{-\frac{n}{8}} \|\theta_0\|_1.
\end{equation*}
\end{lemma}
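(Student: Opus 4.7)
The plan is to leverage the explicit representation of the hyperdiffusion kernel $G(t,x) = \alpha_n t^{-n/4} f_n(|x|/t^{1/4})$ given by Proposition \ref{propositionHyperdiffusionKernelCitedProporties}, together with the sub-Gaussian-type bound $|f_n(\eta)| \leq K_n e^{-\mu_n \eta^{4/3}}$ and the recursion $f_n'(\eta) = -\eta f_{n+2}(\eta)$. Everything then reduces to a single change of variables plus Young's convolution inequality, with $\kappa$ entering only by replacing $t$ with $\kappa t$ (since $T(t,\cdot) = G(\kappa t,\cdot)*\theta_0$ solves $\partial_t T + \kappa\Delta^2 T = 0$).

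For the $L^p$-bound on $G$, I would rescale via $y = t^{-1/4} x$, so that $dx = t^{n/4}\,dy$. This converts the integral to $\|G(t,\cdot)\|_p^p = \alpha_n^p\, t^{-n(p-1)/4} \int_{\mathbb{R}^n} |f_n(|y|)|^p\,dy$, and the remaining $y$-integral is finite by the exponential decay of $f_n$; extracting the $p$-th root yields the claimed bound. For the gradient, the chain rule combined with $f_n'(\eta) = -\eta f_{n+2}(\eta)$ telescopes neatly to $\nabla G(t,x) = -\alpha_n t^{-n/4 - 1/2}\, x\, f_{n+2}(|x|/t^{1/4})$. Applying the same rescaling $y = t^{-1/4}x$ gives $\|\nabla G\|_p^p = \alpha_n^p\, t^{-n(p-1)/4 - p/4} \int_{\mathbb{R}^n} |y|^p |f_{n+2}(|y|)|^p\,dy$, again finite by exponential decay, producing the bound $\|\nabla G\|_p \lesssim_{n,p} t^{-n(1-1/p)/4 - 1/4}$.

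For the solution bounds I would combine these kernel estimates with Young's convolution inequality applied to $T(t,\cdot) = G(\kappa t,\cdot)*\theta_0$. The $L^2*L^1 \hookrightarrow L^2$ version gives $\|T\|_2 \leq \|G(\kappa t)\|_2 \|\theta_0\|_1 \lesssim (\kappa t)^{-n/8}\|\theta_0\|_1$; since differentiation commutes with convolution, the same Young inequality applied to $\nabla T = \nabla G(\kappa t)*\theta_0$ yields $\|\nabla T\|_2 \lesssim (\kappa t)^{-n/8 - 1/4}\|\theta_0\|_1$. For the $L^\infty$ bound I would instead use $L^2*L^2 \hookrightarrow L^\infty$ (Cauchy--Schwarz under the convolution integral): $\|\nabla T\|_\infty \leq \|\nabla G(\kappa t)\|_2\|\theta_0\|_2 \lesssim (\kappa t)^{-n/8-1/4}\|\theta_0\|_2$. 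The Fourier-side estimate follows from Plancherel, $\|e^{-\kappa|\xi|^4 t}\hat\theta_0\|_2 = \|\hat T\|_2 = \|T\|_2$, so it is a restatement of the first bound (alternatively, one can bound $|\hat\theta_0(\xi)| \leq \|\theta_0\|_1$ pointwise and evaluate $\int e^{-2\kappa|\xi|^4 t}\,d\xi$ by the same rescaling trick).

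There is no substantive obstacle here: Proposition \ref{propositionHyperdiffusionKernelCitedProporties} supplies every analytically nontrivial ingredient (existence of $G$, its self-similar form, the differentiation recursion, and the sub-Gaussian tail of $f_n$). The only routine check is convergence of the moment integrals $\int_{\mathbb{R}^n}|y|^p|f_{n+2}(|y|)|^p\,dy$, which is immediate from $|f_{n+2}(\eta)| \leq K_{n+2}e^{-\mu_{n+2}\eta^{4/3}}$. Essentially the lemma is a dimensional analysis exercise driven by the $x \mapsto t^{1/4}y$ scaling symmetry of the hyperdiffusion equation, packaged through Young's inequality.
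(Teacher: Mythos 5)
Your proposal is correct and follows essentially the same route as the paper: the self-similar form of $G$ with the change of variables $y = t^{-1/4}x$, the recursion $f_n'(\eta) = -\eta f_{n+2}(\eta)$ together with the exponential tail bound for the gradient estimate, Young's convolution inequality for the bounds on $T$ and $\nabla T$, and Plancherel for the Fourier-side estimate. The only cosmetic difference is that you write $\nabla G$ explicitly in Cartesian form before rescaling, whereas the paper works in spherical coordinates; the content is identical.
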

\begin{proof}
By passing to spherical coordinates, the change of variables $\rho = r t^{-\frac{1}{4}}$ and Proposition \ref{propositionHyperdiffusionKernelCitedProporties} one gets
\begin{equation*}
    \begin{aligned}
        \|G\|_p^p &= a_n^p t^{-\frac{pn}{4}} \int_{\mathbb{R}^n}\left|f_n \left(\frac{|x|}{t^\frac{1}{4}}\right)\right|^p dx \sim t^{-\frac{pn}{4}} \int_0^\infty r^{n-1} \left|f_n \left(\frac{r}{t^\frac{1}{4}}\right)\right|^p \ dr 
        \\
        &= t^{-\frac{n}{4}(p-1)} \int_0^\infty \rho^{n-1} \left|f_n \left(\rho\right)\right|^p \ d\rho \lesssim t^{-\frac{n}{4}(p-1)} \int_0^\infty \rho^{n-1} e^{-p\mu_n \rho^\frac{4}{3}} \ d\rho \lesssim t^{-\frac{n}{4}(p-1)}
    \end{aligned}
\end{equation*}
and similarly
\begin{equation*}
    \begin{aligned}
        \|\nabla G\|_p^p &= a_n^p t^{-\frac{pn}{4}} \int_{\mathbb{R}^n}\left|\nabla f_n \left(\frac{|x|}{t^\frac{1}{4}}\right)\right|^p dx \sim t^{-\frac{p}{4}(n+1)} \int_{\mathbb{R}^n}\left| f_n' \left(\frac{|x|}{t^\frac{1}{4}}\right)\right|^p dx 
        \\
        &\sim t^{-\frac{p}{4}(n+1)} \int_0^\infty r^{n-1} \left| f_n' \left(\frac{r}{t^\frac{1}{4}}\right)\right|^p dr \sim t^{-\frac{n}{4}\left(p-1\right)-\frac{p}{4}} \int_0^\infty \rho^{n-1} |f_n'(\rho)|^p\ d\rho 
        \\
        &= t^{-\frac{n}{4}\left(p-1\right)-\frac{p}{4}} \int_0^\infty \rho^{n} |f_{n+2}(\rho)|^p\ d\rho \lesssim t^{-\frac{n}{4}\left(p-1\right)-\frac{p}{4}} \int_0^\infty \rho^{n} e^{-p\mu_{n+2}\eta^\frac{4}{3}}\ d\rho 
        \\
        &\lesssim t^{-\frac{n}{4}\left(p-1\right)-\frac{p}{4}}.
    \end{aligned}
\end{equation*}
The bounds on $T$ follow directly as
\begin{equation*}
    T(t,\cdot) = G(\kappa t, \cdot)\star T_0(\cdot),
\end{equation*}
Young's inequality for convolution and the previous bounds on $G$ yield
\begin{equation*}
    \begin{aligned}
        \|T\|_2 &\leq \|G\star \theta_0\|_2 \leq \|G\|_2 \|\theta_0\|_1 \lesssim (\kappa t)^{-\frac{n}{8}} \|\theta_0\|_1\\
        \|\nabla T\|_2 &\leq \|\nabla G\star \theta_0\|_2 \leq \|\nabla G\|_2 \|\theta_0\|_1 \lesssim (\kappa t)^{-\frac{n}{8}-\frac{1}{4}} \|\theta_0\|_1\\
        \|\nabla T\|_\infty &\leq \|\nabla G\star \theta_0\|_\infty \leq \|\nabla G\|_2 \|\theta_0\|_2 \lesssim (\kappa t)^{-\frac{n}{8}-\frac{1}{4}} \|\theta_0\|_2.
    \end{aligned}
\end{equation*}
Analogously by Plancherel
\begin{equation*}
    \left\|e^{-\kappa |\xi|^4t} \hat \theta_0\right\|_2 = \left\|G(\kappa t,\cdot )\star \theta_0(\cdot)\right\|_2 \leq \|G\|_2\|\theta_0\|_1 \lesssim (\kappa t)^{\frac{n}{8}}\|\theta\|_1.
\end{equation*}
\end{proof}

%%%%%%%%%%%%%%%%%%%%%%%%%%%%%%%%%%
\section*{Acknowledgment}
FB acknowledges the support by the Deutsche Forschungsgemeinschaft (DFG) within the Research Training Group GRK 2583 "Modeling, Simulation and Optimization of Fluid Dynamic Applications". CN was partially supported by DFG-TRR181 and GRK-2583.
%%%%%%%%%%%%%%%%%%%%%%%%%%%%%%%%%%

%%%%%%%%%%%%%%%%%%%%%%%%%%%%%%%%%%
% References
%%%%%%%%%%%%%%%%%%%%%%%%%%%%%%%%%%
 
\bibliographystyle{plain}
\bibliography{m1_literature}

\begin{thebibliography}{10}

\bibitem{fengFengIyerThiffeaultPhaseSeparationInTheAdvectiveCahnHilliardEquation}
Yu~Feng, Yuanyuan Feng, Gautam Iyer, and Jean-Luc Thiffeault.
\newblock {Phase Separation in the Advective Cahn–Hilliard Equation}.
\newblock {\em Journal of Nonlinear Science}, 30(6):2821--2845, 2020.

\bibitem{feng2022enhanced}
Yuanyuan Feng, Anna Mazzucato, and Camilla Nobili.
\newblock Enhanced dissipation by circularly symmetric and parallel pipe flows.
\newblock {\em Physica D: Nonlinear Phenomena}, page 133640, 2022.

\bibitem{gazzolaGrunauSomeNewPropertiesOfBiharmonicHeatKernels}
Filippo Gazzola and Hans-Christoph Grunau.
\newblock {Some new properties of biharmonic heat kernels}.
\newblock {\em Nonlinear Analysis: Theory, Methods \& Applications},
  70(8):2965--2973, 2009.
\newblock Liouville Theorems and Detours.

\bibitem{mathewMezicPetzoldAmultiscaleMeasureForMixing}
George Mathew, Igor Mezić, and Linda Petzold.
\newblock {A multiscale measure for mixing}.
\newblock {\em Physica D: Nonlinear Phenomena}, 211(1):23--46, 2005.

\bibitem{mazzucato2021global}
Anna Mazzucato, Yuanyuan Feng, Michele~Coti Zelati, and Michele Dolce.
\newblock Global existence for the two-dimensional kuramoto–sivashinsky
  equation with a shear flow.
\newblock {\em Journal of Evolution Equations}, 21(4):5079--5099, 2021.

\bibitem{milesDoering2018DiffusionLimitedMixingByIncompressibleFlows}
Christopher Miles and Charles~R. Doering.
\newblock {Diffusion-limited mixing by incompressible flows}.
\newblock {\em Nonlinearity}, 31(5):2346--2359, 2018.

\bibitem{nobiliPottel2021lowerBoundsOnMixingNormsForTheAdvectinDiffusionEquationInRd}
Camilla Nobili and Steffen Pottel.
\newblock {Lower bounds on mixing norms for the advection diffusion equation in
  $\mathbb{R}^d$}.
\newblock {\em Nonlinear Differential Equations and Applications}, 29(2), 2022.

\bibitem{schonbek1980DecayOfSolutionToParabolicConservationLaws}
Maria Schonbek.
\newblock {Decay of solution to parabolic conservation laws}.
\newblock {\em Communications in Partial Differential Equations},
  5(4):449--473, 1980.

\bibitem{seisOnTheLittlewoodPaleySpectrumForPassiveScalarTransportEquations}
Christian Seis.
\newblock {On the Littlewood–Paley Spectrum for Passive Scalar Transport
  Equations}.
\newblock {\em Journal of Nonlinear Science}, 30(2):645--656, 2020.

\bibitem{thiffeaultUsingMultiscaleNormsToQuantifyMixingAndTransport}
Jean-Luc Thiffeault.
\newblock {Using multiscale norms to quantify mixing and transport}.
\newblock {\em Nonlinearity}, 25(2):R1--R44, 2012.

\bibitem{zelati2020relation}
Michele~Coti Zelati, Matias~G. Delgadino, and Tarek~M. Elgindi.
\newblock On the relation between enhanced dissipation timescales and mixing
  rates.
\newblock {\em Communications on Pure and Applied Mathematics},
  73(6):1205--1244, 2020.

\end{thebibliography}

\end{document}